\title{On the containment problem for fat points}
\author{Iman Bahmani Jafarloo}
\address{ Dipartimento di Scienze Matematiche, Politecnico di Torino, Italy. \newline
Dipartimento di Matematica, Universit\`a di Torino, Italy.}
\email{iman.bahmanijafarloo@polito.it,\newline
		iman.bahmanijafarloo@unito.it}
\thanks{Corresponding author:  Iman Bahmani Jafarloo}
\author{Giuseppe Zito}
\address{Dipartimento di Matematica e Informatica, Universit\`a  di Catania, Italy.}
\email{giuseppezito@hotmail.it}
\date{July 26, 2018}
\keywords{Containment, fat points scheme, symbolic powers, resurgence}
\subjclass[2010]{14N20, 13A15, 13F20.}
\begin{document}
\begin{abstract}
Given an ideal $I$, the containment problem is concerned with finding the values $m$ and $r$ such that the $m$-th symbolic power of $I$ is contained in its $r$-th ordinary power. A central issue related to this is determining the resurgence for ideals $I$ of fat points in projective space. In this paper we obtain complete results for the resurgence of fat point schemes $m_0P_0+m_1P_1+m_2P_2$ in $\mathbb{P}^N$ for any distinct points $P_0,P_1,P_2$, and, when the points $P_0,\ldots,P_n$ are collinear, we extend this result to fat point schemes $m_0P_0+\dots+m_nP_n$. As a by-product of our determining the resurgence for all three points fat point ideals, we give new examples of ideals with symbolic defect zero. In case the points are noncollinear, a three  fat points ideal can be regarded as a monomial ideal, but it is typically not square-free.
\end{abstract}
\maketitle
\section{introduction}
\subsection{Background}
Let us consider the polynomial ring $R=\mathbb{K}[\mathbb{P}^N]=\mathbb{K}[x_0,\ldots,x_N]$, where $\mathbb{K}$ is an algebraically closed field of any characteristic and $N \geq 2$.
In general, if $I$ is a homogeneous ideal of $R$, the $m$-th symbolic power of $I$ is $I^{(m)}=R \cap (\bigcap_{p\in \textrm{ASS}(I)}{(I^mR_{p})})$.
However in this paper we will always deal with ideals of fat points that are ideals of the form $I=\bigcap_i{I(P_i)^{m_i}}$, where $P_i$ are distinct points in $\mathbb{P}^N$, $I(P_i)$ is the ideal of all the forms that vanish at $P_i$ and the multiplicity $m_i$ is a non-negative integer.
For ideals of this type, the $m$-th symbolic power can be simply defined as $I^{(m)}=\bigcap_i{I(P_i)^{mm_i}}$. During the last decades, there has been a lot of interest  comparing powers of ideals with symbolic
powers in various ways; see for example, \cite{Hochster1973}, \cite{Swanson2000}, \cite{Kodiyalam2000}, \cite{Ein2001}, \cite{Hochster2002}, 
and \cite{Li2006}.
It is easy to see that $I^r \subseteq I^{(r)} \subseteq I^{(m)}$ if and only if $r \geq m $.
Furthermore $I^{(m)} \subseteq I^r $ implies $m \geq r$ but the converse is not true in general. Therefore it makes sense to ask the containment question: given an ideal $I$, for which  $m$ and $r$ is the symbolic power $I^{(m)} $ contained in the ordinary power $I^r$?
In \cite{bocci2010resurgence} and \cite{bocci2010comparing}, Bocci and Harbourne introduced and studied an asymptotic quantity, known as the resurgence, whose computation is clearly linked to the containment problem.
\begin{definition}
Given a non-zero proper homogeneous ideal $I$ in $R$, the resurgence of $I$, denoted by $\rho(I)$, is defined as the quantity: $$\rho(I)=\sup\left\{m/r: I^{(m)}\not \subseteq I^r \right\}.$$
\end{definition}
From the results of \cite{Ein2001,Hochster2002}, it follows that $I^{(m)} \subseteq I^r$ whenever $m \geq Nr$. Thus, we can conclude that $\rho(I) \leq N$ for any homogeneous ideal in $R$.
In general, directly computing $\rho(I)$ is quite difficult and it has been determined only in very special cases. For example, it is known that $\rho(I)=1$ when $I$ is generated by a regular sequence \cite{bocci2010comparing}. The resurgence is also known for certain cases of ideals of the following kinds: monomial ideals
\cite[Theorem 4.11]{Geramita2013}, ideals of a projective cone \cite[Proposition 2.5.1]{bocci2010comparing} and ideals of points on a reducible conic in $ \mathbb{P}^2 $ \cite{DENKERT2013120}.

Another situation where the resurgence is known is for certain ideals $I$ defining
zero-dimensional subschemes of projective space. For example, if $\alpha(I) = \textrm{reg}(I)$, where $\textrm{reg}(I)$ is the Castelnuovo-Mumford
regularity of $I$ and $\alpha(I)$ is the degree of a non-zero element of $I$ of least degree, then the resurgence can be completely described in terms of numerical invariants of $ I $ (\cite[Corollary 2.3.7]{bocci2010comparing}
and \cite[Corollary 1.2]{bocci2010resurgence}).
One of these invariants is called the Waldschmidt constant of $I$.
\begin{definition}
Let $ I $ be a non-zero proper homogeneous ideal of $ R $. The Waldschmidt constant of $I$, denoted by $ \widehat{\alpha}(I) $, is defined as: 
$$\widehat{\alpha}(I)=\inf_{m> 0}\{\alpha(I^{(m)})/{m}\}=\lim_{m \to \infty }{\alpha(I^{(m)})/{m}}.$$
\end{definition}
In particular, when $I$ defines a $0$-dimensional subscheme, the authors in \cite[Theorem 1.2]{bocci2010resurgence} proved that $ \frac{\alpha(I)}{\widehat{\alpha}(I)} \leq \rho(I) \leq \frac{\textrm{reg}(I)}{\widehat{\alpha}(I)}$, so $\rho(I)= \frac{\alpha(I)}{\widehat{\alpha}(I)}$ when $\alpha(I)=\textrm{reg}(I)$. 
\subsection{Preliminaries}
Hereafter $ Z $ is a fat point scheme of $ \mathbb{P}^N $.

\begin{definition}
Let $P_1,\ldots,P_n$ be distinct points in $ \mathbb{P}^N $ and $ m_1,\ldots,m_n $ be non-negative integers. The ideal $I=\bigcap_{i=1}^n{I(P_i)^{m_i}}$ defines a subscheme of $ \mathbb{P}^N $ and we will denote it by $Z=\sum_{i=1}^nm_iP_i$ where by definition we set $ I(Z) = I $.
\end{definition}

\begin{remark}
Consider a fat point subscheme $Z=\sum{m_tP_t}$  where all the points $P_t$ lie on a plane  $\Pi$ (hence a ${\mathbb{P}}^2$, unique if and only if the points are not collinear). The subscheme $\Pi\cap Z$ is a fat point subscheme of $\Pi={\mathbb{P}}^2$. We denote the ideal of $\Pi\cap Z$ in $R_\Pi=\mathbb K[\Pi]$ by $I_\Pi(Z)$, or more simply by 
$I_{{\mathbb{P}}^2}(Z)\subseteq R_{{\mathbb{P}}^2}$. Thus $I_{{\mathbb{P}}^2}(Z)=\bigcap{I_{{\mathbb{P}}^2}(P_t)^{m_t}}
$,
and for emphasis we may denote $I(Z)\subseteq R=\mathbb K[{\mathbb{P}}^N]$ by
$I_{{\mathbb{P}}^N}(Z)\subseteq R_{{\mathbb{P}}^N}=\mathbb K[{\mathbb{P}}^N]$.	
\end{remark}

In this paper we compute the resurgence of $I(Z)$ for two classes of fat point subschemes.

In Section \ref{section2}, we study the subscheme $Z=\sum_{i=1}^n{m_iP_i}$ in $\mathbb{P}^N$, where the points $P_i$ are collinear and we determine the resurgence in Theorem \ref{theo}.
\begin{theorem} \label{theo}
	Let $Z=\sum_{i=1}^n{m_iP_i}$ be a fat point scheme, where $P_1,\ldots,P_n$ are distinct collinear points in $\mathbb{P}^N$. Then $ I(Z)^{(m)}=I(Z)^m \textrm{ for all } m \in \mathbb{N}$, thus $\rho(I(Z))=1$.
\end{theorem}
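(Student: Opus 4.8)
The plan is to reduce to a convenient coordinate system and then exploit an extra grading that turns everything into a combinatorial statement about exponents. Since applying a linear automorphism of $R$ changes neither symbolic nor ordinary powers (nor the resurgence), I would first move the points onto the line $L=\{x_2=\cdots=x_N=0\}$, so that $I(P_i)=(\ell_i,x_2,\ldots,x_N)$, where $\ell_i\in S:=\mathbb{K}[x_0,x_1]$ is the linear form vanishing at $P_i$ on $L$; as the $P_i$ are distinct, the $\ell_i$ are pairwise coprime primes in the UFD $S$. It then suffices to prove the ideal equality $I(Z)^{(m)}=I(Z)^m$ for every $m$: the resurgence statement follows immediately, because once $I(Z)^{(m)}=I(Z)^m$ we have $I(Z)^{(m)}\not\subseteq I(Z)^r\iff I(Z)^m\not\subseteq I(Z)^r\iff m<r$ (a proper homogeneous ideal satisfies $I^a\subseteq I^b\iff a\ge b$ for degree reasons), so the defining set of ratios is $\{m/r:m<r\}$ and hence $\rho(I(Z))=1$.

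The engine is the $\mathbb{Z}_{\ge0}$-grading of $R=S[x_2,\ldots,x_N]$ by degree in $x_2,\ldots,x_N$: write $R=\bigoplus_{d\ge0}R_d$, where $R_d$ is the free $S$-module on the degree-$d$ monomials in $x_2,\ldots,x_N$. Every ideal in sight is homogeneous for this grading, so I can argue one graded piece at a time. Setting $\mathfrak m=(x_2,\ldots,x_N)$ and expanding $(\ell_i,\mathfrak m)^t=\sum_{a=0}^t\ell_i^{t-a}\mathfrak m^a$, a direct check gives $\big[(\ell_i,\mathfrak m)^t\big]_{(d)}=\ell_i^{\max(t-d,0)}R_d$. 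Intersecting over $i$ and using that pairwise coprime powers satisfy $\bigcap_i(\ell_i^{a_i})=(\prod_i\ell_i^{a_i})$ in $S$, I obtain for any multiplicities $t_i$ the clean formula $\big[\bigcap_i(\ell_i,\mathfrak m)^{t_i}\big]_{(d)}=\big(\prod_i\ell_i^{\max(t_i-d,0)}\big)R_d$. Taking $t_i=m_i$ describes $I(Z)$, and taking $t_i=mm_i$ describes $I(Z)^{(m)}$; in particular $\big[I(Z)^{(m)}\big]_{(d)}=g_d\,R_d$ with $g_d:=\prod_i\ell_i^{\max(mm_i-d,0)}$.

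It remains to match the ordinary power. Writing $f_e:=\prod_i\ell_i^{\max(m_i-e,0)}$, so that $[I(Z)]_{(e)}=f_eR_e$, factoring monomials of $x_2,\ldots,x_N$ shows $\big[I(Z)^m\big]_{(d)}=\sum_{e_1+\cdots+e_m=d}\big(\prod_{k=1}^m f_{e_k}\big)R_d$, i.e. it is generated over $R_d$ by the monomials $\prod_i\ell_i^{c_i(\vec e)}$ with $c_i(\vec e)=\sum_{k=1}^m\max(m_i-e_k,0)$, one for each composition $\vec e=(e_1,\ldots,e_m)$ of $d$. Since $I(Z)^m\subseteq I(Z)^{(m)}$ already gives $\big[I(Z)^m\big]_{(d)}\subseteq g_dR_d$, the entire theorem reduces to the reverse inclusion, namely to exhibiting one composition $\vec e^\ast$ with $c_i(\vec e^\ast)=\max(mm_i-d,0)$ \emph{simultaneously} for all $i$. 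This is the crux and the only genuine obstacle, as the minimizers for different points could a priori conflict. I expect to clear it with the balanced composition (all parts equal to $\lfloor d/m\rfloor$ or $\lceil d/m\rceil$): for each fixed $i$ the map $e\mapsto\max(m_i-e,0)$ is convex, so by Jensen/majorization the balanced $\vec e^\ast$ minimizes $c_i(\vec e)$ over all compositions, while a short budget count identifies that minimum as $\max(mm_i-d,0)$. The decisive point is that the balanced composition depends only on $d$ and $m$, not on $i$, so it serves all points at once. This forces $\big[I(Z)^m\big]_{(d)}=g_dR_d=\big[I(Z)^{(m)}\big]_{(d)}$ for every $d$, hence $I(Z)^m=I(Z)^{(m)}$, which completes the proof.
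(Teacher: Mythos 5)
Your argument is correct, but it takes a genuinely different route from the paper's. Both proofs begin the same way: after a linear change of coordinates the points lie on $x_2=\cdots=x_N=0$, and viewing $R$ as $\mathbb{K}[x_0,x_1][x_2,\ldots,x_N]$ gives the degree-by-degree description of $\bigcap_i I(P_i)^{t_i}$ in the $x_2,\ldots,x_N$-grading --- this is exactly the content of Lemma \ref{lemma} and Corollary \ref{Corollary2.2}, and your formula $[I(Z)^{(m)}]_{(d)}=g_d R_d$ with $g_d=\prod_i\ell_i^{\max(mm_i-d,0)}$ is the same statement. From there the paths diverge. The paper factors the ideal: it proves the splitting $I(mZ)=\prod_{i}I((mm_i-mm_{i-1})Z_i)$ with $Z_i=P_i+\cdots+P_n$ reduced collinear schemes (Lemma \ref{claim}), observes that each such $Z_i$ is a complete intersection so its symbolic and ordinary powers agree by Zariski--Samuel, and concludes via the transfer principle of Lemma \ref{criterion}. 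You instead compute $[I(Z)^m]_{(d)}=\sum_{e_1+\cdots+e_m=d}\bigl(\prod_k f_{e_k}\bigr)R_d$ directly and match it with $g_dR_d$ by exhibiting the balanced composition, for which $\sum_k\max(m_i-e_k,0)=\max(mm_i-d,0)$ holds simultaneously for every $i$; writing $d=qm+s$ with $0\le s<m$, the verification splits into $m_i\le q$ (both sides vanish) and $m_i\ge q+1$ (both sides equal $mm_i-d$) and checks out, so the crux you flagged does go through. Your route is more self-contained --- no appeal to complete intersections or to the Zariski--Samuel appendix --- and isolates the real content as a single convexity fact about integer compositions. What it does not produce is the multiplicative splitting of $I(mZ)$ itself, which the paper reuses (through Lemma \ref{criterion}) as the template for the three-point arguments of Sections \ref{section3} and \ref{section4}; the paper's longer detour is buying infrastructure for later, not just this theorem.
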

The proof of Theorem \ref{theo} is a direct consequence of Lemma \ref{criterion} and Lemma \ref{claim}. 

The property that $I(Z)^{(m)}=I(Z)^m$, presented in the statement of Theorem \ref{theo}, gives us more information than the exact value of the resurgence.
In particular, if we define, as in \cite{defect}, the $m$-symbolic defect of a homogeneous ideal $I$ of $R$ as the minimal number $\textrm{sdefect}(I,m)$  of generators of the $R$-module $I^{(m)}/I^m$, Theorem \ref{theo} tells us that $\textrm{sdefect}(I(Z),m)=0$ for all $m$, if $Z$ is a fat point scheme whose support consists of collinear points.
Notice that $\textrm{sdefect}(I(Z),m)=0$ for all $m \geq 1$ implies $\rho(I(Z))=1$. It is still not known if there exists a scheme $Z$ with $\rho(I(Z))=1$ and $I(Z)^m \neq I(Z)^{(m)}$ for some $m$, but our results show that any such $Z$ must be supported at more than three points, not all of which can be collinear.

In Section \ref{section3} we start considering the fat point subschemes $Z$ consisting of three noncollinear points, initially focusing on the $\mathbb{P}^2$ case. In particular, we show how  the invariants $\alpha(I_{\mathbb{P}^2}(Z))$ and $\widehat{\alpha}(I_{\mathbb{P}^2}(Z))$ depend on the values assigned to the multiplicities and how to relate the value of the resurgence of $I_{\mathbb{P}^2}(Z)$ to
$\rho(I_{\mathbb{P}^N}(Z))$ (hereafter, we will always use $I(Z)$ to mean $I_{\mathbb{P}^N}(Z)$).

In Section \ref{section4}, we consider  the subscheme $Z=m_0P_0+m_1P_1+m_2P_2$, where the $P_i$'s are noncollinear points in $\mathbb{P}^N$ and $m_0\leq m_1 \leq m_2$ are nonnegative integers. In Theorem \ref{thmpn} we classify fat point ideals
in $\mathbb{P}^N$ supported at three noncollinear points which have $m$-symbolic defect zero for all $m$ (and hence such that $\rho(I(Z))=1$). 
\begin{theorem} \label{thmpn}
Let $P_0, P_1$ and $P_2$ be noncollinear points in $\mathbb{P}^N$ and $m_2 \geq \max(m_0,m_1)  $. Consider the fat point scheme $Z=m_0P_0+m_1P_1+m_2P_2$.
Then $\textrm{sdefect}(I(Z),m)=0$ for all $m\in \mathbb{N}$ if and only if one of the following conditions holds:
\begin{itemize}
\item[\textbf{(a)}]
$m_0+m_1 \leq m_2$;
\item[\textbf{(b)}]
$m_0+m_1> m_2$ and $m_0+m_1+m_2$ is even.
\end{itemize}
\end{theorem}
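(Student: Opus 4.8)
The plan is to reduce the statement to a combinatorial assertion about a monomial ideal and then prove the two implications separately, the forcing of (a) or (b) being short and their sufficiency being the main work. First I would choose coordinates so that $P_0,P_1,P_2$ are the first three coordinate points of $\mathbb{P}^N$; then $I(P_0)=(x_1,\dots,x_N)$, $I(P_1)=(x_0,x_2,\dots,x_N)$, $I(P_2)=(x_0,x_1,x_3,\dots,x_N)$, so $I(Z)$ together with all its ordinary and symbolic powers is a monomial ideal. Since the three points span a unique $\Pi\cong\mathbb P^2$ and the extra variables enter the three defining primes symmetrically, the comparison of Section \ref{section3} lets me pass to the planar ideal $I=(x_1,x_2)^{a}\cap(x_0,x_2)^{b}\cap(x_0,x_1)^{c}\subseteq\mathbb K[x_0,x_1,x_2]$, where I abbreviate $a=m_0\le b=m_1\le c=m_2$; the property $I(Z)^{(m)}=I(Z)^{m}$ is inherited between $\mathbb P^2$ and $\mathbb P^N$, as are $\alpha$ and $\widehat\alpha$. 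Here a monomial $x_0^{u}x_1^{v}x_2^{w}$ lies in $I^{(m)}$ exactly when
\[
v+w\ge ma,\qquad u+w\ge mb,\qquad u+v\ge mc,
\]
and $I^{m}\subseteq I^{(m)}$ is automatic. I also record the values from Section \ref{section3}, $\widehat\alpha(I)=\max\{c,(a+b+c)/2\}$ and $\alpha(I)=\max\{c,\lceil(a+b+c)/2\rceil\}$, and note that the disjunction of (a) and (b) is equivalent to the single equality $\alpha(I)=\widehat\alpha(I)$: one has $\alpha>\widehat\alpha$ precisely when $a+b>c$ and $a+b+c$ is odd.

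For the direction ``$\mathrm{sdefect}(I(Z),m)=0$ for all $m$ $\Rightarrow$ (a) or (b)'' I argue contrapositively. If neither condition holds then $\alpha(I)>\widehat\alpha(I)$, so the Bocci--Harbourne lower bound $\rho(I)\ge\alpha(I)/\widehat\alpha(I)$ recalled above (\cite{bocci2010resurgence}) gives $\rho(I)>1$. Hence there are integers $m>r$ with $I^{(m)}\not\subseteq I^{r}$; as $m>r$ forces $I^{m}\subseteq I^{r}$, the equality $I^{(m)}=I^{m}$ would yield $I^{(m)}\subseteq I^{r}$, a contradiction, so $I^{(m)}\ne I^{m}$ for that $m$.

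The substantial direction is the converse. Assuming (a) or (b) I prove $I^{(m)}=I^{m}$ for all $m$ via the inductive identity $I^{(m)}=I\cdot I^{(m-1)}$; since $I\cdot I^{(m-1)}\subseteq I^{(m)}$ is automatic and $m=1$ is trivial, it suffices to establish the peeling inclusion $I^{(m)}\subseteq I\cdot I^{(m-1)}$, after which $I^{(m-1)}=I^{m-1}$ gives $I^{(m)}=I^{m}$. Concretely, for a monomial $x_0^{u}x_1^{v}x_2^{w}\in I^{(m)}$ with $m\ge2$ I must subtract a minimal generator $x_0^{p}x_1^{q}x_2^{r}\in I$ with $(p,q,r)\le(u,v,w)$ whose removal lands in the $(m-1)$-region, i.e. with $q+r,\ p+r,\ p+q$ lying in the windows $[a,\,v+w-(m-1)a]$, $[b,\,u+w-(m-1)b]$, $[c,\,u+v-(m-1)c]$. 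In case (b) with $a+b+c$ even, put $d=(a+b+c)/2$: if $u\ge d-a,\ v\ge d-b,\ w\ge d-c$ I peel the balanced generator $x_0^{d-a}x_1^{d-b}x_2^{d-c}$, whose sums $q+r,p+r,p+q$ equal $a,b,c$; otherwise one variable is deficient and I peel the corner generator avoiding it, one of $x_0^{b}x_1^{a}$, $x_1^{c}x_2^{b}$, $x_0^{c}x_2^{a}$, checking the three remainder inequalities using $a+b>c$ and $m\ge2$. Case (a), where $a+b\le c$, is handled by the same template, now peeling a degree-$c$ generator $x_0^{p}x_1^{c-p}$ with $b\le p\le c-a$ fitted to the sizes of $u,v$, or the generators $x_1^{c}x_2^{b}$, $x_0^{c}x_2^{a}$ when $u$ or $v$ is small.

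The main obstacle is precisely this peeling: one must check that for every lattice point of the $m$-region at least one admissible generator subtracts, equivalently that the polyhedra $\mathcal P_k=\{(u,v,w)\ge0:\ v+w\ge ka,\ u+w\ge kb,\ u+v\ge kc\}$ satisfy the Minkowski decomposition $\mathcal P_m\cap\mathbb Z^3=(\mathcal P_1\cap\mathbb Z^3)+(\mathcal P_{m-1}\cap\mathbb Z^3)$. This is where the dichotomy originates: the unique vertex of $\mathcal P_1$ off the coordinate planes is $\big(\tfrac{b+c-a}{2},\tfrac{a+c-b}{2},\tfrac{a+b-c}{2}\big)$, a lattice point exactly under (a) or (b), and its half-integrality in the excluded case is simultaneously what makes $\mathcal P_1$ strictly larger than the Newton polyhedron of $I$ and what forces $\alpha>\widehat\alpha$ in the previous step. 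A tidy way to organize the sufficiency is to observe that under (a) or (b) this vertex is integral, so $\mathcal P_m$ is the Newton polyhedron of $I^{m}$ and hence $I^{(m)}=\overline{I^{m}}$ for all $m$, reducing the claim to the normality of $I$ --- which the explicit peeling above supplies. The remaining effort is the finite but delicate subcase bookkeeping for the remainder inequalities.
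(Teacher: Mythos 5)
Your ``only if'' direction is sound and is essentially the paper's own argument (Proposition \ref{prop} together with Corollary \ref{boundd}): the Bocci--Harbourne bound applied to the planar ideal gives $\rho(I_{\mathbb{P}^2}(Z))>1$, a containment failure $I_{\mathbb{P}^2}(Z)^{(m)}\not\subseteq I_{\mathbb{P}^2}(Z)^r$ with $m>r$ lifts to $\mathbb{P}^N$ by the lemma opening Section \ref{section3}, and $m\geq r$ then forces $I(Z)^{(m)}\neq I(Z)^m$. The genuine gap is in the sufficiency direction, where you assert that the property $I(Z)^{(m)}=I(Z)^m$ ``is inherited between $\mathbb{P}^2$ and $\mathbb{P}^N$, as are $\alpha$ and $\widehat\alpha$.'' The only comparison available is the quotient map $q:R_{\mathbb{P}^N}\to R_{\mathbb{P}^2}$, which transports containments \emph{from} $\mathbb{P}^N$ \emph{to} $\mathbb{P}^2$; that is the wrong direction for sufficiency, and no ascent statement is formal here. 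Indeed the invariants are demonstrably not inherited: for $N\geq 3$ the monomial $x_3^{m_2}$ lies in $I_{\mathbb{P}^N}(Z)$, so $\alpha(I_{\mathbb{P}^N}(Z))=m_2$, whereas $\alpha(I_{\mathbb{P}^2}(Z))=\lceil(m_0+m_1+m_2)/2\rceil>m_2$ whenever $m_0+m_1>m_2$. Monomials involving $x_3,\dots,x_N$ satisfy the three symbolic conditions in a pooled way (each such variable lies in all three point ideals), and verifying $I^{(m)}=I^m$ on them is precisely where the paper spends most of its effort: every splitting lemma (Lemmas \ref{theo3}, \ref{theo5}, \ref{lemma31}) has a separate branch for $b=\sum_{i\geq 3}a_i>0$. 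To repair your argument you would have to run the peeling directly in $N+1$ variables with the conditions of Lemma \ref{general}, which changes both the generator list and the case analysis.

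Within $\mathbb{P}^2$ your strategy is genuinely different from the paper's and is plausible: instead of the paper's multiplicative splittings of $I(kZ)$ into ideals of collinear subschemes and of $t(P_0+P_1+P_2)$ (fed into Lemma \ref{criterion}), you prove the single identity $I^{(m)}=I\cdot I^{(m-1)}$ by peeling, equivalently the normality of $I$ combined with the integrality of the vertex $\bigl(\tfrac{b+c-a}{2},\tfrac{a+c-b}{2},\tfrac{a+b-c}{2}\bigr)$; this is an attractive reformulation that makes the parity dichotomy transparent. But be aware that the deferred ``bookkeeping'' is not entirely routine: for instance, peeling the corner generator $x_0^{b}x_1^{a}$ in case (b) requires the remainder inequality $u+v\geq(m-1)c+a+b$, which is strictly stronger than the hypothesis $u+v\geq mc$ and only follows by combining $v+w\geq ma$ and $u+w\geq mb$ with the assumed smallness of $w$. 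These checks appear to close in three variables, but they must actually be written, and then the whole scheme must still be transported to $\mathbb{P}^N$ as above.
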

The proof that Theorem \ref{thmpn}
(a) implies $\textrm{sdefect}(I(Z),m) = 0$ for all $m \geq 1$ is Proposition \ref{cor1}. The proof that Theorem
\ref{thmpn} (b) also implies $\textrm{sdefect}(I(Z),m) = 0$ for all $m \geq 1$ is Proposition \ref{cor7}. To complete
the proof of Theorem \ref{thmpn}, it remains to show that $\textrm{sdefect}(I(Z),m) > 0$ for some $m > 0$
whenever $m_0 + m_1 > m_2$ and $m_0 + m_1 + m_2$ is odd. This follows from Theorem \ref{casec}.

\begin{theorem} \label{casec}
Let $P_0, P_1$ and $P_2$ be noncollinear points in $\mathbb{P}^N$ and $ \max( m_0,m_1) \leq m_2  $. Consider the fat point scheme $Z=m_0P_0+m_1P_1+m_2P_2$. If  $ m_0+m_1 > m_2 $ and $ 	m_0+m_1+m_2  $ is odd, then $$ \rho(I(Z))= \frac{m_0+m_1+m_2+1}{m_0+m_1+m_2}.$$
\end{theorem}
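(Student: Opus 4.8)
The plan is to reduce everything to $\mathbb{P}^2$ and then to an explicit combinatorial analysis of the monomial ideal $I(Z)$. By the reduction established in Section \ref{section3}, it suffices to compute $\rho(I_{\mathbb{P}^2}(Z))$, so I place $P_0,P_1,P_2$ at the coordinate points of $\mathbb{P}^2$; then $I=I_{\mathbb{P}^2}(Z)$ is the monomial ideal whose exponent set is $\{(a,b,c)\in\mathbb{Z}_{\ge 0}^3 : b+c\ge m_0,\ a+c\ge m_1,\ a+b\ge m_2\}$, and $I^{(m)}$ has the same description with each $m_i$ replaced by $mm_i$. Writing $S=m_0+m_1+m_2$, the hypotheses $m_0+m_1>m_2$ and $S$ odd give, by the computations of Section \ref{section3}, $\alpha(I)=\lceil S/2\rceil=(S+1)/2$ and $\widehat{\alpha}(I)=S/2$, so that $\alpha(I)/\widehat{\alpha}(I)=(S+1)/S$.

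The lower bound $\rho(I)\ge (S+1)/S$ is immediate from the Bocci--Harbourne inequality $\alpha(I)/\widehat{\alpha}(I)\le\rho(I)$ quoted in \cite[Theorem 1.2]{bocci2010resurgence}. Concretely, the monomial $v^{*}$ with $b+c=mm_0$, $a+c=mm_1$, $a+b=mm_2$ lies in $I^{(m)}$ and has degree $mS/2$, which is strictly less than $r\,\alpha(I)=r(S+1)/2$ exactly when $m/r<(S+1)/S$; since every element of $I^r$ has degree at least $\alpha(I^r)=r\,\alpha(I)$, this witnesses $I^{(m)}\not\subseteq I^r$ for all such pairs.

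The real work is the upper bound $\rho(I)\le(S+1)/S$. I stress that one cannot simply invoke $\rho\le\mathrm{reg}(I)/\widehat{\alpha}(I)$: for these ideals $\mathrm{reg}(I)$ is strictly larger than $\alpha(I)$ in general (for instance $Z=P_0+3P_1+3P_2$ has $\alpha=4$ but $\mathrm{reg}=5$), so the regularity bound is too weak here. Instead I argue directly that $I^{(m)}\subseteq I^r$ whenever $m/r>(S+1)/S$. Since $I^r$ is monomial, $x^v\in I^r$ if and only if $v$ dominates a sum $w_1+\cdots+w_r$ of $r$ vectors from the exponent set of $I$; and since both $I^{(m)}$ and $I^r$ are upward closed, it suffices to verify this for the finitely many minimal generators of $I^{(m)}$. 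The ``corner'' generators (those with a vanishing coordinate) are handled at once, by decomposing into $r$ copies of the corresponding generator of $I$ and using only $m\ge r$. The decisive case is the ``central'' generator $v^{*}$, where all three inequalities are tight, and here the key point is an integrality/parity obstruction: every exponent vector $w$ of $I$ satisfies $2\deg w\ge S$, and as $S$ is odd this forces $\deg w\ge(S+1)/2$; hence any decomposition of $v^{*}$ into $r$ pieces needs total degree at least $r(S+1)/2$, which is compatible with $\deg v^{*}=mS/2$ precisely when $mS\ge(S+1)r$, i.e. $m/r\ge(S+1)/S$.

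To finish, when $m/r>(S+1)/S$ I produce an explicit decomposition of $v^{*}$ using the three central generators $g^{(0)},g^{(1)},g^{(2)}$ of $I$, namely the lattice points adjacent to the Waldschmidt vertex $(e_0/2,e_1/2,e_2/2)$ with $e_i=S-2m_i$. Taking $n_i$ copies of $g^{(i)}$ with $n_0+n_1+n_2=r$, the requirement $\sum_k w_k\le v^{*}$ reduces to the linear inequalities $2n_i\ge r-e_i(m-r)$, whose sum is exactly $2r\ge 3r-(m-r)S$, that is, the budget inequality $mS\ge(S+1)r$. The strict inequality $m/r>(S+1)/S$ supplies the slack needed to round to nonnegative integers $n_i$ summing to $r$, and the remaining minimal generators interpolate between the corner and central cases and carry strictly more slack. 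This yields $I^{(m)}\subseteq I^r$ for all $m/r>(S+1)/S$, hence $\rho(I)\le(S+1)/S$, and combined with the lower bound gives $\rho(I(Z))=(m_0+m_1+m_2+1)/(m_0+m_1+m_2)$. The main obstacle is precisely this last feasibility step: converting the parity-driven budget count into an honest integral decomposition of every minimal generator, uniformly in $m$ and $r$.
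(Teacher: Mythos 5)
Your lower bound is fine and is essentially the paper's Proposition \ref{prop}: compute $\alpha$ and $\widehat{\alpha}$ in $\mathbb{P}^2$, apply $\alpha/\widehat{\alpha}\leq\rho$ from \cite[Theorem 1.2]{bocci2010resurgence}, and lift via Corollary \ref{boundd}. The upper bound, however, has a genuine gap, and it begins with your first sentence: the reduction to $\mathbb{P}^2$ only goes one way. Corollary \ref{boundd} gives $\rho(I_{\mathbb{P}^2}(Z))\leq\rho(I_{\mathbb{P}^N}(Z))$, because a containment in $\mathbb{P}^N$ pushes down to $\mathbb{P}^2$ under the quotient $q$; it does not allow you to deduce $I_{\mathbb{P}^N}(Z)^{(m)}\subseteq I_{\mathbb{P}^N}(Z)^r$ from the analogous containment in $\mathbb{P}^2$. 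Nor is $I_{\mathbb{P}^N}(Z)$ a cone over $I_{\mathbb{P}^2}(Z)$ (for instance $x_3\in I(P_0+P_1+P_2)$ in $\mathbb{P}^N$ for $N\geq 3$, but $x_3$ is not in the extension of the plane ideal), so \cite[Proposition 2.5.1]{bocci2010comparing} does not apply either. This is precisely why the paper's Section \ref{section4} carries the extra variables $x_3,\ldots,x_N$ (the quantity $b=\sum_{i\geq 3}a_i$) through every case of Lemmas \ref{theo5}, \ref{lemma31} and \ref{theo4}: the minimal generators of $I_{\mathbb{P}^N}(Z)^{(m)}$ involving those variables have no counterpart in $\mathbb{P}^2$ and must be decomposed separately.

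Second, even granting the reduction, your upper bound is not yet a proof. The degree--parity count ($2\deg w\geq S$ with $S$ odd, hence $\deg w\geq (S+1)/2$) is only a \emph{necessary} condition for a decomposition of $v^{*}$ into $r$ factors to exist --- it re-derives the lower bound --- while the sufficiency, namely producing nonnegative integers $n_i$ with $\sum n_i=r$ and $2n_i\geq r-e_i(m-r)$ and then checking every other minimal generator of $I^{(m)}$, is exactly the step you defer (``the main obstacle is precisely this last feasibility step''). Note that the ceilings in $\sum_i\lceil (r-e_i(m-r))/2\rceil\leq r$ can exceed the budget count by up to $3/2$, so the rounding is not automatic near the boundary ratio, and for odd $m$ the central vertex is not a lattice point, so the list of minimal generators itself changes. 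The paper avoids all of this by a different route: it splits $I(Z)^{(k)}$ multiplicatively as $I(P_0+P_1+P_2)^{(k)}\cdot I((m_0-1)P_0+(m_1-1)P_1+(m_2-1)P_2)^{k}$ (Corollary \ref{cor8}), and then proves the containments $I(W(n_0,n_1,n_2))\subseteq I(Z_1)^{\sum n_i}$ and $I(V(n_0,n_1,n_2,r))\subseteq I(Z_1)^{r-1}$ by induction, peeling off a single factor $x_0x_1$, $x_0x_2$, $x_1x_2$ or $x_j$ with $j\geq 3$ at each step (Lemma \ref{theo4}), and finally converts these into Corollary \ref{cor5}. To salvage your polyhedral approach you would need to (i) run it on the full monomial ideal in $N+1$ variables and (ii) actually carry out the integral decomposition for the complete list of minimal generators, uniformly in $m$ and $r$.
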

The proof follows at once from Corollary \ref{cor5} and Proposition \ref{prop}.
\section{Fat points on a line in $\mathbb{P}^N$} \label{section2}
Let $ L $ be a line in $ \mathbb{P}^N  $ and let $P_1,\ldots,P_n$ be distinct points which lie on $ L $. Consider the scheme $Z= \sum_{i=1}^n{m_iP_i}$,  where the multiplicities $m_1\leq m_2\leq \dots \leq m_n$ are nonnegative integers. In this section, we determine the resurgence and we prove Theorem \ref{theo}.
To do so requires some lemmas.

The following lemma plays a significant role throughout this section. 
\begin{lemma} \label{lemma}
Let $F \in R$ be a homogeneous form of degree $d$.
Then there are uniquely determined forms $g_{d,i_2,\ldots,i_N}\in \mathbb{K}[x_0,x_1]$ of degree $d-(i_2+\dots+i_N)$ such that
\begin{equation}\label{equation1}
F=\sum_{k=0}^d \sum_{i_2+\dots+i_N=k}{g_{d,i_2,\ldots,i_N}\cdot x_2^{i_2}\cdots x_N^{i_N}}.
\end{equation}
Moreover, given any homogeneous linear form $G=bx_0+ax_1\ (a,b \in \mathbb{K}\ \mbox{not both zero})$, let $I$ be the ideal $\langle G,x_2,\ldots,x_N \rangle^m.$
Then $F \in I $ if and only if $G^{m-(i_2+\dots+i_N)}$ divides $g_{d,i_2,\ldots,i_N}$ whenever $m>i_2+\dots+i_N.$
\end{lemma}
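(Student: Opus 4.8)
The first assertion is pure bookkeeping, so I would dispose of it first. Every monomial of degree $d$ factors uniquely as $x_0^{j_0}x_1^{j_1}\cdot x_2^{i_2}\cdots x_N^{i_N}$, with a monomial in $x_0,x_1$ times a monomial in the remaining variables. I would group the monomials appearing in $F$ according to their exponent vector $(i_2,\ldots,i_N)$ on the last $N-1$ variables; the sum of the terms carrying a fixed such vector is $g_{d,i_2,\ldots,i_N}\cdot x_2^{i_2}\cdots x_N^{i_N}$, where $g_{d,i_2,\ldots,i_N}\in\mathbb{K}[x_0,x_1]$ is homogeneous of degree $d-(i_2+\dots+i_N)$. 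This is exactly \eqref{equation1}, and uniqueness is forced since distinct monomials of $R$ are linearly independent, so the partition of the monomials of $F$ by their $x_2,\ldots,x_N$-exponents is canonical.

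For the membership criterion my plan is to reduce $I$ to a genuine monomial ideal by a linear change of variables that mixes only $x_0$ and $x_1$. Since $a,b$ are not both zero, $G=bx_0+ax_1$ is a nonzero linear form in $x_0,x_1$; I would pick a second linear form $G'\in\mathbb{K}[x_0,x_1]$ so that $\{G,G'\}$ is a basis of the linear forms in $x_0,x_1$, and set $y_0=G$, $y_1=G'$, and $y_j=x_j$ for $j\geq 2$. Then $\langle G,x_2,\ldots,x_N\rangle=\langle y_0,y_2,\ldots,y_N\rangle$, so $I=\langle y_0,y_2,\ldots,y_N\rangle^m$ is, with respect to the $y$-variables, the monomial ideal generated by all $y_0^{a_0}y_2^{i_2}\cdots y_N^{i_N}$ with $a_0+i_2+\dots+i_N=m$. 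Consequently a monomial $y_0^{a_0}y_1^{a_1}y_2^{i_2}\cdots y_N^{i_N}$ lies in $I$ if and only if $a_0+i_2+\dots+i_N\geq m$, and an arbitrary form lies in $I$ if and only if every monomial occurring in it does.

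The key point to observe is that this change of variables preserves both $\mathbb{K}[x_0,x_1]=\mathbb{K}[y_0,y_1]$ and the monomials $x_2^{i_2}\cdots x_N^{i_N}=y_2^{i_2}\cdots y_N^{i_N}$, so the decomposition \eqref{equation1} is unaffected and each $g_{d,i_2,\ldots,i_N}$ is the same element viewed in $\mathbb{K}[y_0,y_1]$. Fixing $(i_2,\ldots,i_N)$ with $k=i_2+\dots+i_N$ and expanding $g_{d,i_2,\ldots,i_N}=\sum_a c_a\,y_0^{a}y_1^{d-k-a}$, the monomials of the corresponding term of $F$ are $c_a\,y_0^{a}y_1^{d-k-a}y_2^{i_2}\cdots y_N^{i_N}$, which by the criterion above lies in $I$ exactly when $a+k\geq m$. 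If $k\geq m$ this holds for every $a$, so the term imposes no condition; if $k<m$ it holds precisely when $a\geq m-k$ for all contributing monomials, i.e. when $y_0^{\,m-k}=G^{m-k}$ divides $g_{d,i_2,\ldots,i_N}$. Since distinct terms involve disjoint sets of monomials and $I$ is monomial in the $y$-variables, $F\in I$ if and only if each term lies in $I$, which is the asserted divisibility $G^{m-(i_2+\dots+i_N)}\mid g_{d,i_2,\ldots,i_N}$ whenever $m>i_2+\dots+i_N$.

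The one step that genuinely carries the argument is the reduction to a monomial ideal: only after passing to the $y$-coordinates may membership be tested monomial by monomial, which is what decouples the different terms of \eqref{equation1} and converts the single condition $F\in I$ into the separate divisibility conditions on the $g_{d,i_2,\ldots,i_N}$. Everything surrounding it is the elementary bookkeeping described above, and I do not anticipate a real obstacle beyond keeping the exponent arithmetic ($a+k\geq m$ versus $a\geq m-k$) straight.
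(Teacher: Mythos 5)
Your proposal is correct and follows essentially the same route as the paper: a linear change of coordinates mixing only $x_0$ and $x_1$ that turns $I$ into a monomial ideal, after which membership is tested monomial by monomial and decouples into the stated divisibility conditions on the $g_{d,i_2,\ldots,i_N}$. The only cosmetic difference is that the paper applies the inverse automorphism to $F$ and tests membership in $\langle x_1,\ldots,x_N\rangle^m$ (treating the degenerate cases $a=0$ or $b=0$ separately), whereas you rename coordinates and argue uniformly.
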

\begin{proof}
The claim about $F=\sum_{k=0}^d \sum_{i_2+\dots+i_N=k}{g_{d,i_2,\ldots,i_N}\cdot x_2^{i_2}\cdots x_N^{i_N}}$ follows from thinking of $R$ as $R=\mathbb{K}[x_0,x_1][x_2,\ldots,x_N].$ The second claim, regarding $F \in I$, is clear when $a=0$ or $b=0$, taking into account that $I$ is a monomial ideal in these cases. If $G=bx_0+ax_1, a,b\neq0,$ consider the $\mathbb{K}$-algebra automorphism $f:R \to R$ defined by $f(x_i)=x_i$ for all $i\neq1$ with $f(x_1)=G$. Then $f(\langle x_1,\ldots,x_N\rangle^m)=I.$ Taking $\phi$ to be the inverse automorphism, we have 
$$\phi(F)=\sum_{k=0}^d \sum_{i_2+\dots+i_N=k}{\phi(g_{d,i_2,\ldots,i_N})\cdot x_2^{i_2}\cdots x_N^{i_N}} \in \langle x_1,\ldots,x_N\rangle^m,$$
so $x_1^{m-(i_2+\dots+i_N)}$ divides $\phi(g_{d,i_2,\ldots,i_N})$ whenever $m>i_2+\dots+i_N$, hence $G^{m-(i_2+\dots+i_N)}$ divides $g_{d,i_2,\ldots,i_N}$whenever $m>i_2+\dots+i_N$.
\end{proof}
\begin{remark}
Considering the previous proof, since the ideal of the point $P=[-a:b:0:\dots:0]$ is $G=\left\langle bx_0+ax_1,x_2,\ldots,x_N\right\rangle $, indeed, we showed that
$F\in I(mP)$ if and only if  $G^{m-(i_2+\dots+i_N)}|g_{d,i_2,\ldots,i_N}$  whenever $m>i_2+\dots+i_N$.
\end{remark}
Using unique factorization for homogeneous polynomials in $ \mathbb{K}[x_0, x_1]$, the following
corollary is an immediate consequence of the previous lemma.
\begin{corollary} \label{Corollary2.2}
Given distinct points $P_i=[-d_i:c_i:0:\dots:0]$, $i=0,\ldots,n$, on the line $x_2=x_3=\ldots=x_N=0$, let $ F $ be a form as {\em(\ref{equation1})}. Then
$F\in I(\sum_{i=0}^nmm_iP_i)$ if and only if $ {(c_ix_0+d_ix_1)}^{mm_i-(i_2+\dots+i_N)}|g_{d,i_2,\ldots,i_N}$  whenever $i_2+\dots+i_N<mm_i$ for all $i=1,\ldots,n$.
In other words, we have shown that the homogeneous ideal $I(\sum_{i=0}^n{mm_iP_i})$ is generated by ``monomials''  of the type $ G_1^{a_1}\cdots G_n^{a_n}\cdot x_2^{b_2}\cdots x_N^{b_N}$, where $G_j=c_jx_0+d_jx_1$ and  $ a_j= \max_j(0,mm_j-(b_2+\dots+b_N)), \quad j=1,\ldots,n $ and $b_2+\dots+b_N \leq \max(mm_0,\ldots,mm_n)$.
\end{corollary}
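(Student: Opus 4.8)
The plan is to reduce the statement to the single-point case already handled in Lemma \ref{lemma} and then glue the resulting divisibility conditions together by unique factorization. First I would record that, by the definition of the ideal of a fat point scheme,
\[
I\Big(\sum_{i=0}^n m m_i P_i\Big)=\bigcap_{i=0}^n I(P_i)^{m m_i}=\bigcap_{i=0}^n\langle G_i,x_2,\ldots,x_N\rangle^{m m_i},
\]
where $G_i=c_ix_0+d_ix_1$ is the linear form cutting out $P_i$ on the line $x_2=\dots=x_N=0$. Writing $F$ in its normal form (\ref{equation1}), membership of $F$ in the $i$-th factor $I(m m_iP_i)=\langle G_i,x_2,\ldots,x_N\rangle^{m m_i}$ is governed directly by the Remark following Lemma \ref{lemma}: it holds if and only if $G_i^{m m_i-(i_2+\dots+i_N)}$ divides $g_{d,i_2,\ldots,i_N}$ for every multi-index with $i_2+\dots+i_N<m m_i$.

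The core of the argument is then to intersect these conditions. Since $F$ lies in the intersection exactly when it lies in every factor, $F\in I(\sum_i m m_iP_i)$ if and only if, for each fixed multi-index $(i_2,\ldots,i_N)$, the coefficient $g_{d,i_2,\ldots,i_N}$ is divisible by $G_i^{m m_i-(i_2+\dots+i_N)}$ for \emph{every} $i$ with $m m_i>i_2+\dots+i_N$. This is where unique factorization enters: because the points $P_i$ are distinct, the linear forms $G_i\in\mathbb{K}[x_0,x_1]$ are pairwise non-proportional, hence pairwise coprime irreducibles in the UFD $\mathbb{K}[x_0,x_1]$. Consequently the family of conditions $G_i^{a_i}\mid g_{d,i_2,\ldots,i_N}$ (as $i$ varies) is equivalent to the single condition $\prod_i G_i^{a_i}\mid g_{d,i_2,\ldots,i_N}$ with $a_i=\max(0,m m_i-(i_2+\dots+i_N))$, which is precisely the claimed membership criterion. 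I expect this coprimality-plus-unique-factorization step to be the only real content of the proof; everything else is bookkeeping. It is also exactly where collinearity is used, since putting all the points on the line $x_2=\dots=x_N=0$ forces every $G_i$ into the two-variable ring, where non-proportional linear forms are automatically coprime.

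Finally, to extract the description of the generators I would run the criterion in reverse. Given $F\in I(\sum_i m m_iP_i)$, each coefficient factors as $g_{d,i_2,\ldots,i_N}=\big(\prod_i G_i^{a_i}\big)\cdot\tilde g_{i_2,\ldots,i_N}$ with $a_i=\max(0,m m_i-(i_2+\dots+i_N))$, so substituting back into (\ref{equation1}) exhibits $F$ as an $R$-linear combination of the elements $\prod_i G_i^{a_i}\cdot x_2^{i_2}\cdots x_N^{i_N}$; conversely each such element clearly satisfies the criterion and so lies in the ideal. Hence these ``monomials'' generate $I(\sum_i m m_iP_i)$. To see that one may restrict to exponents with $b_2+\dots+b_N\leq\max(m m_0,\ldots,m m_n)$, observe that once $b_2+\dots+b_N\geq\max_i m m_i$ all the exponents $a_i$ vanish and the generator reduces to the pure monomial $x_2^{b_2}\cdots x_N^{b_N}$; any such monomial of larger degree is a multiple of one with $b_2+\dots+b_N=\max_i m m_i$ and is therefore redundant. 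This produces the finite generating set in the stated range.
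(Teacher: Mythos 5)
Your proof is correct and follows essentially the same route as the paper, which simply declares the corollary ``an immediate consequence'' of Lemma \ref{lemma} via unique factorization in $\mathbb{K}[x_0,x_1]$; you have merely spelled out the details (intersecting the single-point conditions and combining them using the pairwise coprimality of the non-proportional linear forms $G_i$). The derivation of the generating set and the degree bound $b_2+\dots+b_N\leq\max_i(mm_i)$ is also the intended bookkeeping.
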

The following general lemma gives us a simple criterion for an ideal $I(Z)$ of a fat point scheme to be such that $I(Z)^{(m)}=I(Z)^m$ for all $m \in \mathbb{N}$.
\begin{lemma} \label{criterion}
Let $Z=Z_1+\dots+Z_r$ where $Z_1,\ldots,Z_r \subset \mathbb{P}^N$ are fat point subschemes such that
\begin{equation} \label{crit}
I(kZ)=\prod_{i=1}^rI(kZ_i) \qquad \forall k \in \mathbb{N},
\end{equation}  
where $Z_i$ is a fat point scheme satisfying the condition $I(Z_i)^{(m)}=I(Z_i)^m$, for all $m \in \mathbb{N}$.
Then we have also $$ I(Z)^{m}=I(Z)^{(m)}  \qquad \forall m \in \mathbb{N}.$$
\end{lemma}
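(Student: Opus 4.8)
The plan is to exploit the fact, recorded in the introduction, that for any fat point scheme $W=\sum_t n_tP_t$ the $m$-th symbolic power of its ideal is simply the ideal of the scheme $mW=\sum_t mn_tP_t$; that is, $I(W)^{(m)}=I(mW)$ for every $m$. This identity is what converts the hypotheses of the lemma into a chain of elementary equalities, with no homological or primary-decomposition input required beyond this single observation.

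First I would apply the factorization hypothesis (\ref{crit}) with $k=m$ to write $I(mZ)=\prod_{i=1}^rI(mZ_i)$. Rewriting each factor via the symbolic-power identity gives $I(mZ_i)=I(Z_i)^{(m)}$, and the assumption that each $Z_i$ satisfies $I(Z_i)^{(m)}=I(Z_i)^m$ converts this into $I(mZ_i)=I(Z_i)^m$. Substituting and pulling the common exponent out of the product yields
\begin{equation*}
I(Z)^{(m)}=I(mZ)=\prod_{i=1}^rI(Z_i)^m=\Bigl(\prod_{i=1}^rI(Z_i)\Bigr)^{\!m}.
\end{equation*}

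It then remains only to identify $\prod_{i=1}^rI(Z_i)$ with $I(Z)$ itself, and this is exactly hypothesis (\ref{crit}) specialized to $k=1$, since $I(Z)=I(1\cdot Z)=\prod_{i=1}^rI(1\cdot Z_i)=\prod_{i=1}^rI(Z_i)$. Combining the two displays gives $I(Z)^{(m)}=I(Z)^m$ for all $m\in\mathbb{N}$, which is the claim.

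I do not anticipate a genuine obstacle here: once the symbolic power is recognized as $I(mZ)$, the argument is a formal manipulation of the two hypotheses. The only point requiring a little care is to invoke the product hypothesis (\ref{crit}) at both $k=m$ (to expand the symbolic power) and $k=1$ (to recognize the base ideal), and to apply the per-component equality $I(Z_i)^{(m)}=I(Z_i)^m$ at the same exponent $m$ used throughout.
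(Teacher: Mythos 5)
Your proposal is correct and is essentially the paper's own proof: both use the identity $I(W)^{(m)}=I(mW)$, the hypothesis (\ref{crit}) at $k=1$ and $k=m$, and the per-component equality $I(Z_i)^{(m)}=I(Z_i)^m$, merely traversing the same chain of equalities in the opposite direction (the paper starts from $I(Z)^m$ and expands, you start from $I(Z)^{(m)}$). No substantive difference.
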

\begin{proof}
Considering (\ref{crit}) when $k=1$, we obtain $I(Z)=\prod_{i=1}^r{I(Z_i)}$, thus
$$ I(Z)^m=\prod_{i=1}^r{I(Z_i)^m}=\prod_{i=1}^r{I(Z_i)^{(m)}}=\prod_{i=1}^rI(mZ_i)=I(mZ)=I(Z)^{(m)}.$$ 	
So the proof is complete.
\end{proof}
Taking  into account Lemma \ref{criterion}, in order to prove Theorem \ref{theo}, it suffices to exhibit a suitable splitting for an ideal $I(Z)$ of a collinear fat point scheme. The following lemma gives us a precise answer to this problem.
\begin{lemma} \label{claim}
	Let $Z=\sum_{i=1}^n{m_iP_i}$ be a fat point scheme, where the $P_i$'s are collinear points in $\mathbb{P}^N.$ 	We can assume that the points lie on the line $x_2=x_3=\ldots=x_N=0$  and  $0=m_0\leq m_1\leq \dots \leq m_n$.
	Then 
	$$ I(mZ)=\prod_{i=1}^{n}I((mm_i-mm_{i-1})Z_i), $$
	where $Z_i=P_i+\dots+P_n $ for $i=1,\ldots,n$.
\end{lemma}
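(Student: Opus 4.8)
The plan is to prove the two inclusions separately, relying on the explicit monomial description of fat point ideals on a line given in Corollary \ref{Corollary2.2}. Throughout I write $\mu_j = mm_j$, $\delta_i = \mu_i - \mu_{i-1}$ (so $\mu_0 = 0$ and $\sum_{i=1}^j \delta_i = \mu_j$), and $G_j = c_j x_0 + d_j x_1$, so that $I(P_j) = \langle G_j, x_2, \ldots, x_N\rangle$ and $I(mZ) = \bigcap_{j=1}^n I(P_j)^{\mu_j}$. The driving observation is that as schemes $\sum_{i=1}^n \delta_i Z_i = mZ$, since the multiplicity of $\sum_i \delta_i Z_i$ at $P_j$ is $\sum_{i\le j}\delta_i = \mu_j$; this elementary identity underlies both inclusions.

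The inclusion $\prod_{i=1}^n I(\delta_i Z_i) \subseteq I(mZ)$ is the easy direction. Fix $j$. Each factor with $i \le j$ satisfies $I(\delta_i Z_i) \subseteq I(P_j)^{\delta_i}$ because $P_j$ is one of the points of $Z_i$; discarding the remaining factors only enlarges the product, so $\prod_{i=1}^n I(\delta_i Z_i) \subseteq \prod_{i=1}^j I(P_j)^{\delta_i} = I(P_j)^{\mu_j}$. Intersecting over $j$ yields the claim.

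The substance is the reverse inclusion $I(mZ) \subseteq \prod_{i=1}^n I(\delta_i Z_i)$, and here I would argue on monomial generators. By Corollary \ref{Corollary2.2}, $I(mZ)$ is generated by monomials $M = G_1^{a_1}\cdots G_n^{a_n} x_2^{b_2}\cdots x_N^{b_N}$ with $a_j = \max(0, \mu_j - b)$, where $b = b_2 + \cdots + b_N \le \mu_n$, while $I(\delta_i Z_i)$ is generated by the monomials $\bigl(\prod_{j=i}^n G_j^{\,\delta_i - c}\bigr) x_2^{c_2}\cdots x_N^{c_N}$ with $c = c_2 + \cdots + c_N \le \delta_i$; the key structural feature is that in such a generator every $G_j$ (for $j \ge i$) carries the \emph{same} exponent. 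It therefore suffices to factor each generator $M$ as a product $\prod_{i=1}^n M_i$ of one generator $M_i$ from each $I(\delta_i Z_i)$. Writing $e_i$ for the common $G$-exponent in $M_i$, matching the exponent of $G_j$ forces the partial sums $\sum_{i \le j} e_i = \max(0, \mu_j - b)$, so I would set $e_i := \max(0,\mu_i - b) - \max(0,\mu_{i-1}-b)$ and verify, via the three cases $b \le \mu_{i-1}$, $\mu_{i-1} < b \le \mu_i$, and $b > \mu_i$, that $0 \le e_i \le \delta_i$. This is precisely the condition guaranteeing that $M_i$ is an admissible generator, since its $x$-degree is then $c^{(i)} = \delta_i - e_i \in [0,\delta_i]$.

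It remains to split the variables $x_2,\ldots,x_N$ among the layers: I need nonnegative integers $c_k^{(i)}$ with prescribed row sums $\sum_k c_k^{(i)} = \delta_i - e_i$ and column sums $\sum_i c_k^{(i)} = b_k$. Since both families of marginals share the same total $\sum_i(\delta_i - e_i) = \mu_n - \sum_i e_i = b = \sum_k b_k$, such a table exists (for instance by a greedy northwest-corner filling), and this completes the factorization and hence the inclusion. The main obstacle I anticipate is the bookkeeping in this reverse inclusion: one must simultaneously force the $G$-exponents into the admissible range $[0,\delta_i]$ and realize the residual $x$-degrees as an honest distribution of the $b_k$'s. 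Once the identity $\sum_{i\le j}\delta_i = \mu_j$ is exploited, both tasks reduce to the elementary case analysis and the transportation argument just described.
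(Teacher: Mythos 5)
Your proposal is correct and follows essentially the same route as the paper: both directions are handled identically, with the reverse inclusion reduced via Corollary \ref{Corollary2.2} to factoring each monomial generator $G_1^{a_1}\cdots G_n^{a_n}x_2^{b_2}\cdots x_N^{b_N}$, assigning to the $i$-th layer the common $G$-exponent $a_i-a_{i-1}=\max(0,\mu_i-b)-\max(0,\mu_{i-1}-b)$ and distributing the residual $x_k$-degrees across layers (your transportation table is exactly the paper's splitting of $x_2^{b_2}\cdots x_N^{b_N}$ into consecutive blocks of linear forms). The only inessential difference is that the paper additionally records that each $I(Z_i)$ is a complete intersection, which is not needed for this statement but is used downstream when invoking Lemma \ref{criterion}.
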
 
\begin{proof}
		Notice that the ideal $I(Z_i)$ defined in the previous lemma satisfies 
	\[	
	I(Z_i)^{(m)}=I(Z_i)^m \textrm{ for all } m.
	\]
	In fact,   $I(Z_i)$ is a complete intersection scheme (a set of simple points on a line), and by \cite[Lemma 5 and Theorem 2 of Appendix 6]{Zariski1960}, its symbolic powers and ordinary powers are always equal. 
	
	Therefore it is enough to show  $I(mZ)=\prod_{i=1}^{n}I(Z_i)^{mm_i-mm_{i-1}}.$
 We denote by $G_i$ the linear form in $ \mathbb{K}[x_0,x_1] $ such that we have $I(P_i)=\langle G_i,x_2,\ldots,x_N \rangle $ for all $i=1,\ldots,n$. The inclusion $``\supseteq" $ is immediately concluded from the definition of $ I(Z) $. For proving the other inclusion $``\subseteq" $, it suffices to consider Corollary \ref{Corollary2.2} and show that a monomial  $ \mathcal{M}=G_1^{a_1}\cdots G_n^{a_n}\cdot x_2^{b_2}\cdots x_N^{b_N}$ where $ a_j=\max_j(0,mm_j-\sum_{i=2}^Nb_i)$ and $\sum_{i=2}^Nb_i\leq mm_n$, for all $1\leq j\leq n$ is contained in $\prod_{i=1}^{n}I((mm_i-mm_{i-1})Z_i) $.
Regard $H=x_2^{b_2} \cdots x_N^{b_N}$ as a product of $b_2+\cdots+b_N$ linear forms. Let $H_1$ be the product of the first $mm_1$ forms in $H$, $H_2$ be the product of the next $mm_2-mm_1$ linear forms in $H$, etc., until, for some $j$, $H_j$ is the product of the remaining forms in $H$. Since $b_2+\cdots+b_N \leq mm_n$, we know $j \leq n$. If $j<n$, set $H_i=1$ for $i>j$ (in particular, if $b_2+\cdots+b_N<mm_1$, then $H_1=H$ and $H_i=1$ for $1<i\leq n$). 
Define $\mathcal{M}_i=G_i\cdots G_n$ for $i=1,\ldots,n$ and then we can write
$$ \mathcal{M}=(\mathcal{M}_1^{a_1}H_1)(\mathcal{M}_2^{a_2-a_1}H_2)\cdots (\mathcal{M}_n^{a_n-a_{n-1}}H_n), $$
and it is easy to check that $\mathcal{M}_i^{a_i-a_{i-1}}H_i \in I(Z_i)^{(mm_i-mm_{i-1})}$ for each $i$.
\end{proof}
\section{Three noncollinear points: $ \mathbb{P}^N $ versus $ \mathbb{P}^2 $} \label{section3}
\begin{lemma}
Let $ Z $ be a three non-collinear fat points scheme. If $I_{\mathbb{P}^N}(mZ)\subseteq I_{\mathbb{P}^N}(Z)^r $, then $I_{\mathbb{P}^2}(mZ)\subseteq I_{\mathbb{P}^2}(Z)^r $. 
\end{lemma}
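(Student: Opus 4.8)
The plan is to exploit the two ring maps connecting $R_{\mathbb{P}^N}=\mathbb{K}[x_0,\ldots,x_N]$ and $R_{\mathbb{P}^2}=\mathbb{K}[x_0,x_1,x_2]$ that become available once coordinates are normalized. Since $P_0,P_1,P_2$ are noncollinear they span a unique plane $\Pi\cong\mathbb{P}^2$, and after a linear change of coordinates I may assume $\Pi=\{x_3=\cdots=x_N=0\}$, so that $I_{\mathbb{P}^2}(\cdot)$ is literally computed in $R_{\mathbb{P}^2}$. Let $\iota:R_{\mathbb{P}^2}\hookrightarrow R_{\mathbb{P}^N}$ be the inclusion of subrings and let $\pi:R_{\mathbb{P}^N}\to R_{\mathbb{P}^2}$ be the $\mathbb{K}$-algebra homomorphism sending $x_3,\ldots,x_N$ to $0$ and fixing $x_0,x_1,x_2$, so that $\pi\circ\iota=\mathrm{id}_{R_{\mathbb{P}^2}}$.

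The argument rests on one inclusion for each map. First, $\iota\bigl(I_{\mathbb{P}^2}(mZ)\bigr)\subseteq I_{\mathbb{P}^N}(mZ)$: a form $f\in\mathbb{K}[x_0,x_1,x_2]$ is constant in the directions transverse to $\Pi$, so every partial derivative of $f$ involving $x_3,\ldots,x_N$ vanishes identically, while its remaining partials evaluated at a point $P_i\in\Pi$ are the same whether $f$ is read in $\mathbb{P}^2$ or in $\mathbb{P}^N$; hence $f$ has the same vanishing order at each $P_i$ in either space, and membership in $I_{\mathbb{P}^2}(mZ)$ forces membership in $I_{\mathbb{P}^N}(mZ)$. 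Second, $\pi\bigl(I_{\mathbb{P}^N}(Z)\bigr)\subseteq I_{\mathbb{P}^2}(Z)$: if $F$ vanishes to order $m_i$ at $P_i\in\Pi$, then the $x_0,x_1,x_2$-partials of the restriction $\pi(F)$ are the restrictions of the corresponding partials of $F$, so they still vanish at $P_i$, giving $\pi(F)\in I_{\mathbb{P}^2}(Z)$.

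With these in hand I would chain the maps. Take any $f\in I_{\mathbb{P}^2}(mZ)$. By the first inclusion $\iota(f)\in I_{\mathbb{P}^N}(mZ)$, so the hypothesis $I_{\mathbb{P}^N}(mZ)\subseteq I_{\mathbb{P}^N}(Z)^r$ yields $\iota(f)\in I_{\mathbb{P}^N}(Z)^r$. Applying $\pi$ and using that it is a ring homomorphism, $\pi(\iota(f))\in\pi\bigl(I_{\mathbb{P}^N}(Z)^r\bigr)=\pi\bigl(I_{\mathbb{P}^N}(Z)\bigr)^r\subseteq I_{\mathbb{P}^2}(Z)^r$ by the second inclusion. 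Since $\pi\circ\iota=\mathrm{id}$, the left-hand side is just $f$, whence $f\in I_{\mathbb{P}^2}(Z)^r$, which is the desired containment.

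The routine bookkeeping is the normalization of coordinates and the fact that $\pi$ is a ring homomorphism (so it respects the $r$-th power). The one point that genuinely needs care, and which I expect to be the main although mild obstacle, is the claim that a form supported on $x_0,x_1,x_2$ has the \emph{same} multiplicity at $P_i$ whether viewed in $\mathbb{P}^2$ or in $\mathbb{P}^N$; this is cleanest to justify via the observation that differentiation in the plane directions commutes with the substitution $x_3=\cdots=x_N=0$, so the Taylor-coefficient conditions defining the two multiplicities coincide. Noncollinearity itself enters only to guarantee that $\Pi$, and hence the very meaning of $I_{\mathbb{P}^2}(Z)$, is unambiguous.
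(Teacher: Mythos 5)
Your argument is correct and is essentially the paper's own: the paper uses the quotient $q=\pi:R_{\mathbb{P}^N}\to R_{\mathbb{P}^2}$ and the single assertion $q(I_{\mathbb{P}^N}(Z))=I_{\mathbb{P}^2}(Z)$, which you have merely unpacked into the two inclusions via the section $\iota$ before chaining the maps in the same way. One caveat: the paper works over $\mathbb{K}$ of arbitrary characteristic, and your justification of the two inclusions via partial derivatives and Taylor coefficients is not valid in characteristic $p$ (vanishing of partials does not detect membership in $\mathfrak{m}^k$ there, e.g.\ for $x^p$); instead justify them directly from the definitions, noting that $I_{\mathbb{P}^2}(P_i)$ and $I_{\mathbb{P}^N}(P_i)$ are generated by linear forms with $\iota\bigl(I_{\mathbb{P}^2}(P_i)\bigr)\subseteq I_{\mathbb{P}^N}(P_i)$ and $\pi\bigl(I_{\mathbb{P}^N}(P_i)\bigr)\subseteq I_{\mathbb{P}^2}(P_i)$, so the same containments hold for their $mm_i$-th powers and hence for the intersections defining $I(mZ)$ and $I(Z)$.
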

\begin{proof}
We have the canonical ring quotient $ q:R_{\mathbb{P}^N}\to R_{\mathbb{P}^2} $. The key fact is that $ q(I_{\mathbb{P}^N}(Z))=I_{\mathbb{P}^2}(Z) $. Hence, if $ I_{\mathbb{P}^N}(mZ)\subseteq I_{\mathbb{P}^N}(Z)^r $, then $ I_{\mathbb{P}^2}(mZ)=q(I_{\mathbb{P}^N}(mZ))\subseteq q(I_{\mathbb{P}^N}(Z)^r)=I_{\mathbb{P}^2}(Z)^r. $
\end{proof}
\begin{corollary}\label{boundd}
$$\rho(I_{\mathbb{P}^2}(Z))\leq \rho(I_{\mathbb{P}^N}(Z))$$
\end{corollary}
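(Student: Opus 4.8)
The plan is to read the corollary off directly from the preceding Lemma by passing to its contrapositive and then invoking the monotonicity of the supremum. First I would recall that for a fat point scheme the symbolic powers are given by $I(Z)^{(m)}=I(mZ)$, so that for each ambient dimension the resurgence can be rewritten purely in terms of non-containments:
$$\rho(I_{\mathbb{P}^k}(Z))=\sup\left\{m/r: I_{\mathbb{P}^k}(mZ)\not\subseteq I_{\mathbb{P}^k}(Z)^r\right\},$$
applied for $k=2$ and for $k=N$. This reduces the problem to comparing the two index sets over which these suprema are taken.

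Next I would take the contrapositive of the Lemma: if $I_{\mathbb{P}^2}(mZ)\not\subseteq I_{\mathbb{P}^2}(Z)^r$, then necessarily $I_{\mathbb{P}^N}(mZ)\not\subseteq I_{\mathbb{P}^N}(Z)^r$. In other words, every ratio $m/r$ that witnesses a failure of containment in $\mathbb{P}^2$ also witnesses a failure in $\mathbb{P}^N$. Setting
$$S_2=\left\{m/r: I_{\mathbb{P}^2}(mZ)\not\subseteq I_{\mathbb{P}^2}(Z)^r\right\}, \qquad S_N=\left\{m/r: I_{\mathbb{P}^N}(mZ)\not\subseteq I_{\mathbb{P}^N}(Z)^r\right\},$$
the contrapositive is exactly the assertion that $S_2\subseteq S_N$.

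Finally, since the supremum is monotone with respect to set inclusion, $S_2\subseteq S_N$ yields $\rho(I_{\mathbb{P}^2}(Z))=\sup S_2\leq \sup S_N=\rho(I_{\mathbb{P}^N}(Z))$, which is the claim (and the inequality holds trivially in the degenerate case where $S_2$ is empty). I do not anticipate any genuine obstacle here: all the mathematical content is already carried by the Lemma, whose proof rests on the fact that the canonical quotient $q:R_{\mathbb{P}^N}\to R_{\mathbb{P}^2}$ satisfies $q(I_{\mathbb{P}^N}(Z))=I_{\mathbb{P}^2}(Z)$, so the corollary is a purely formal consequence. The one point deserving a moment's attention is to check that the non-containment ratios transfer from $\mathbb{P}^2$ to $\mathbb{P}^N$ in the correct direction, and this is precisely what taking the contrapositive guarantees.
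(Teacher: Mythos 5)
Your proposal is correct and follows exactly the paper's own argument: the lemma's contrapositive gives the inclusion of the non-containment sets $S_2\subseteq S_N$, and monotonicity of the supremum yields the inequality. No issues.
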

\begin{proof}
By the previous lemma it follows that
$$ \left\{m/r: I_{\mathbb{P}^2}(Z)^{(m)}\not \subseteq I_{\mathbb{P}^2}(Z)^r \right\} \subseteq \left\{m/r: I_{\mathbb{P}^N}(Z)^{(m)}\not \subseteq I_{\mathbb{P}^N}(Z)^r \right\},$$
so the desired result easily follows from the definition of resurgence and from the properties of the supremum.
\end{proof}
\begin{proposition} \label{prop33}
Let $Z = m_0P_0 + m_1P_1 + m_2P_2\subset \mathbb{P}^N$, assuming $ \max(m_0,m_1) \leq m_2 $ and that the points are noncollinear. Then $ \alpha(I_{\mathbb{P}^2}(Z)) $ is as follows:
\begin{itemize}
\item[\textbf{(a)}] $ m_2 $ if $ m_2\geq m_1+m_0 $
\item[\textbf{(b)}] $ (m_0 + m_1 + m_2)/2$ if $  m_2\leq m_0 + m_1$ and $m_0 + m_1 + m_2$ is even
\item[\textbf{(c)}] $ (m_0 +m_1 +m_2 +1)/2 $ if $ m_2\leq m_0 +m_1 $ and $ m_0 +m_1 +m_2 $ is odd.
\end{itemize}
\end{proposition}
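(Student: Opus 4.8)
The quantity $\alpha(I_{\mathbb{P}^2}(Z))$ is the least degree of a nonzero plane curve passing through $P_0,P_1,P_2$ with multiplicities at least $m_0,m_1,m_2$, so the plan is to prove matching lower and upper bounds in each of the three cases. Throughout I would work in $\mathbb{P}^2$ and use the three sides of the triangle determined by the (noncollinear, hence distinct) points: write $\ell_{ij}$ for the line through $P_i$ and $P_j$. The two basic tools are that the multiplicity of a form at a point never exceeds its degree, and B\'ezout's theorem in the form: if a curve $C$ does not contain a line $\ell$, then $\deg C\geq\sum_{P\in\ell}\operatorname{mult}_P(C)$, since each local intersection number at $P\in\ell$ is at least $\operatorname{mult}_P(C)$.

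For the lower bounds, case (a) is immediate: as $F$ vanishes to order $m_2$ at $P_2$, we get $\deg F\geq\operatorname{mult}_{P_2}(F)\geq m_2$, so $\alpha\geq m_2$. For cases (b) and (c) I would take a nonzero $F\in I_{\mathbb{P}^2}(Z)$ of minimal degree and strip off the three sides, writing $F=\ell_{01}^{a}\ell_{02}^{b}\ell_{12}^{c}\,G$ with $G$ divisible by none of $\ell_{01},\ell_{02},\ell_{12}$. Setting $e_i=\operatorname{mult}_{P_i}(G)$ and reading the multiplicity of $F$ at each vertex gives $a+b+e_0\geq m_0$, $a+c+e_1\geq m_1$, $b+c+e_2\geq m_2$ (at $P_0$ only $\ell_{01},\ell_{02}$ contribute, and similarly at the other vertices); summing yields $2(a+b+c)+(e_0+e_1+e_2)\geq m_0+m_1+m_2$. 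Applying B\'ezout to $G$ against $\ell_{01}$ and $\ell_{02}$, which $G$ does not contain, gives $\deg G\geq e_0+e_1$ and $\deg G\geq e_0+e_2$, hence $2\deg G\geq 2e_0+e_1+e_2\geq e_0+e_1+e_2$. Adding this to the previous inequality and using $\deg F=(a+b+c)+\deg G$ produces $2\deg F\geq m_0+m_1+m_2$; since $\deg F$ is an integer this forces $\alpha\geq\lceil(m_0+m_1+m_2)/2\rceil$, which equals $(m_0+m_1+m_2)/2$ in case (b) and $(m_0+m_1+m_2+1)/2$ in case (c).

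For the upper bounds I would exhibit an explicit product of lines of the claimed degree. In case (a), take $\ell_{02}^{m_0}\ell_{12}^{m_1}\ell^{\,m_2-m_0-m_1}$, where $\ell$ is a general line through $P_2$ and the exponent $m_2-m_0-m_1$ is nonnegative by hypothesis; its degree is $m_2$ and its multiplicities at $P_0,P_1,P_2$ are exactly $m_0,m_1,m_2$. In case (b), I would solve the linear system $x+y=m_0$, $x+z=m_1$, $y+z=m_2$ for the exponents of $\ell_{01},\ell_{02},\ell_{12}$; the solution $x=(m_0+m_1-m_2)/2$, $y=(m_0+m_2-m_1)/2$, $z=(m_1+m_2-m_0)/2$ consists of nonnegative integers precisely because $m_2\leq m_0+m_1$, the remaining two triangle inequalities follow from $m_2\geq\max(m_0,m_1)$, and the sum being even guarantees integrality; the curve $\ell_{01}^{x}\ell_{02}^{y}\ell_{12}^{z}$ then has degree $x+y+z=(m_0+m_1+m_2)/2$ with the correct multiplicities. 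In case (c) the sum is odd, so in fact $m_2<m_0+m_1$; replacing $m_2$ by $m_2+1$ makes the total even and preserves all triangle inequalities, so the case (b) construction yields a curve of degree $(m_0+m_1+m_2+1)/2$ with multiplicities at least $(m_0,m_1,m_2+1)\geq(m_0,m_1,m_2)$, giving $\alpha\leq(m_0+m_1+m_2+1)/2$.

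The routine parts are the two explicit constructions, which require only bookkeeping of multiplicities and a parity check. The main obstacle is the lower bound in cases (b) and (c): a priori the minimal curve may contain some or all three sides of the triangle, so B\'ezout cannot be applied to $F$ directly. The decisive step is to strip off these lines first and run the intersection estimate on the residual curve $G$, which is divisible by none of the sides; summing the three vertex conditions and feeding in $2\deg G\geq e_0+e_1+e_2$ then yields the sharp bound $2\deg F\geq m_0+m_1+m_2$. The split between (b) and (c) is governed solely by the integrality forced when taking the ceiling of $(m_0+m_1+m_2)/2$, which matches the parity appearing in the statement.
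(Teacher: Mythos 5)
Your proof is correct, and its overall architecture matches the paper's: explicit products of the three coordinate lines for the upper bounds (in the coordinates $P_0=[1:0:0]$, $P_1=[0:1:0]$, $P_2=[0:0:1]$ your curves $\ell_{01}^{x}\ell_{02}^{y}\ell_{12}^{z}$ are exactly the monomials the paper writes down), followed by a B\'ezout argument giving $2\deg F\geq m_0+m_1+m_2$ and a parity refinement for the lower bounds in (b) and (c). The one genuine difference is the auxiliary curve used in B\'ezout. The paper intersects a minimal-degree form with an irreducible conic through the three points, which yields $2\alpha(I_{\mathbb{P}^2}(Z))\geq m_0+m_1+m_2$ in one line but implicitly requires picking a conic that is not a component of the form (harmless, since there are infinitely many irreducible conics through three noncollinear points and only finitely many components, though the paper does not say this). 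You instead strip off the three sides of the triangle, writing $F=\ell_{01}^{a}\ell_{02}^{b}\ell_{12}^{c}G$, and apply B\'ezout to the residual $G$ against two of the lines; this is more elementary in that it never leaves the realm of lines, and it confronts head-on the possibility that the minimal curve contains the sides, at the cost of the bookkeeping with $a,b,c$ and $e_0,e_1,e_2$. Both routes produce the same inequality $2\deg F\geq m_0+m_1+m_2$, and the split between (b) and (c) is handled identically by integrality. A cosmetic remark: in case (a) the paper's witness $x_0^{m_2-m_0}x_1^{m_0}=\ell_{12}^{m_2-m_0}\ell_{02}^{m_0}$ already has the right multiplicities without the auxiliary general line $\ell$ through $P_2$ that you introduce, but your construction is equally valid.
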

\begin{proof}
We may choose coordinates so that the points $P_0, P_1, P_2$ are the coordinate vertices of $\mathbb{P}^2$. Namely  we assume that $P_0=[1:0:0]$, $P_1=[0:1:0]$ and $P_2=[0:0:1]$. 
The proof in case \textbf{(a)} is: $ x_0^{m_2-m_0}x_1^{m_0} \in I_{\mathbb{P}^2}(Z) $ hence $ \alpha(I_{\mathbb{P}^2}(Z))\leq m_2 $, but no non-zero form of degree less than $ m_2 $ can vanish to order $ m_2 $ at a point, hence
$ \alpha(I_{\mathbb{P}^2}(Z))\geq m_2  $ too. 
The proof in case \textbf{(b)} is: 
$$ x_0^{(m_2+m_1-m_0)/2}x_1^{(m_2+m_0-m_1)/2}x_2^{(m_1+m_0-m_2)/2}\in I_{\mathbb{P}^2}(Z) $$
 so $ \alpha(I_{\mathbb{P}^2}(Z))\leq(m_0+m_1+m_2)/2 $. But $I_{\mathbb{P}^2}(Z)$ is monomial and
 there are irreducible conics through the three points. Thus $ 2\alpha(I_{\mathbb{P}^2}(Z)) \geq m_0+m_1+m_2$ by Bezout's Theorem. Thus, $ \alpha(I_{\mathbb{P}^2}(Z))=(m_0+m_1+m_2)/2 $. The proof in the last case is:  all three of $ m_2+m_1-m_0$, $m_2 +m_0-m_1$ and $m_1 +m_0-m_2 $ are odd and nonnegative, hence at least one.  Then 
$$ x_0^{(m_2+m_1-m_0+1)/2}x_1^{(m_2+m_0-m_1+1)/2}x_2^{(m_1+ m_0-m_2-1)/2}\in I_{\mathbb{P}^2}(Z), $$
 so $ \alpha (I_{\mathbb{P}^2}(Z))\leq(m_0+m_1+m_2+1)/2 $. But as before there are irreducible conics through the three points. Thus $ 2\alpha (I_{\mathbb{P}^2}(Z))\geq m_0+m_1+m_2$ by B\'{e}zout's Theorem (as before), and thus $ 2\alpha (I_{\mathbb{P}^2}(Z))\geq m_0+m_1+m_2+1$ (since $m_0+m_1+m_2$ is odd). Thus $ \alpha (I_{\mathbb{P}^2}(Z))= (m_0+m_1+m_2+1)/2 $.
\end{proof}
\begin{proposition} \label{prop}
	Let $P_0,P_1$ and $P_2$ be three noncollinear  points in $\mathbb{P}^N$ and consider  $Z= m_0P_0+m_1P_1+ \nolinebreak m_2P_2$. Suppose $m_0 \leq m_1 \leq m_2$. If $m_0+m_1 > m_2$ and $\sum_{i=0}^2m_i$  is odd, then  $\rho(I_{\mathbb{P}^N}(Z)) \geq \frac{1+\sum_{i=0}^2{ m_i}}{\sum_{i=0}^2 m_i}. $
\end{proposition}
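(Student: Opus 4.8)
The plan is to reduce the problem to the plane and then invoke the Bocci--Harbourne lower bound for the resurgence. Write $s=m_0+m_1+m_2$. By Corollary \ref{boundd} we have $\rho(I_{\mathbb{P}^2}(Z))\le\rho(I_{\mathbb{P}^N}(Z))$, so it suffices to prove $\rho(I_{\mathbb{P}^2}(Z))\ge\frac{1+s}{s}$ and transfer the bound upward. Since $I_{\mathbb{P}^2}(Z)$ defines a zero-dimensional subscheme of $\mathbb{P}^2$, the lower bound of \cite[Theorem 1.2]{bocci2010resurgence} applies and gives $\rho(I_{\mathbb{P}^2}(Z))\ge\alpha(I_{\mathbb{P}^2}(Z))/\widehat{\alpha}(I_{\mathbb{P}^2}(Z))$. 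Thus the whole task reduces to computing, or suitably estimating, these two invariants.

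For $\alpha$ I would quote Proposition \ref{prop33} directly: the hypotheses $m_0+m_1>m_2$ (equivalently $m_2\le m_0+m_1$) together with $s$ odd place us in case \textbf{(c)}, so $\alpha(I_{\mathbb{P}^2}(Z))=(s+1)/2$. For $\widehat{\alpha}$ I would use the infimum characterization $\widehat{\alpha}(I)=\inf_{m>0}\alpha(I^{(m)})/m$ together with the identity $I_{\mathbb{P}^2}(Z)^{(m)}=I_{\mathbb{P}^2}(mZ)$ for fat point ideals. Taking $m=2$, the scheme $2Z$ has multiplicities $2m_i$ satisfying $2m_2\le 2(m_0+m_1)$ with even sum $2s$, so Proposition \ref{prop33}\textbf{(b)} yields $\alpha(I_{\mathbb{P}^2}(2Z))=s$, whence $\widehat{\alpha}(I_{\mathbb{P}^2}(Z))\le s/2$. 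Combining the two estimates gives $\rho(I_{\mathbb{P}^2}(Z))\ge\frac{(s+1)/2}{s/2}=\frac{s+1}{s}$, and Corollary \ref{boundd} carries this to $\mathbb{P}^N$.

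The one step needing care is the evaluation of $\widehat{\alpha}$: one must confirm that scaling the multiplicities by $m$ keeps us in the regime $mm_2\le mm_0+mm_1$ where Proposition \ref{prop33}\textbf{(b)}/\textbf{(c)} applies, and that the parity behaves as expected. Using any even $m$ (for instance $m=2$) sidesteps the parity issue entirely, since then the sum $ms$ is even and case \textbf{(b)} gives $\alpha(I_{\mathbb{P}^2}(mZ))=ms/2$; this already delivers $\widehat{\alpha}(I_{\mathbb{P}^2}(Z))\le s/2$, which is all the lower bound requires, and the same computation over even $m$ in fact shows $\widehat{\alpha}(I_{\mathbb{P}^2}(Z))=s/2$. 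I would explicitly flag why a direct degree-theoretic non-containment at $(m,r)=(s+1,s)$ does \emph{not} work: there $s+1$ is even, so $\alpha(I_{\mathbb{P}^2}(Z)^{(s+1)})=\alpha(I_{\mathbb{P}^2}((s+1)Z))=s(s+1)/2$ by Proposition \ref{prop33}\textbf{(b)}, which equals $s\cdot\alpha(I_{\mathbb{P}^2}(Z))=\alpha(I_{\mathbb{P}^2}(Z)^{s})$. The coincidence of initial degrees is precisely the obstacle that forces the argument through $\widehat{\alpha}$ and the Bocci--Harbourne inequality rather than through a naive minimal-degree witness.
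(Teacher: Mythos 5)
Your proposal is correct and follows essentially the same route as the paper's proof: reduce to $\mathbb{P}^2$ via Corollary \ref{boundd}, apply the Bocci--Harbourne bound $\rho\ge\alpha/\widehat{\alpha}$, read off $\alpha=(s+1)/2$ from Proposition \ref{prop33}(c), and get $\widehat{\alpha}=s/2$ from the even symbolic powers via Proposition \ref{prop33}(b). The only (immaterial) difference is that you use the infimum characterization at $m=2$ to bound $\widehat{\alpha}$ from above, whereas the paper computes the limit along all even $m$; both suffice.
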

\begin{proof}
	The points $P_0,P_1, P_2$ span a plane  $\mathbb{P}^2 \subset \mathbb{P}^N $. Without loss of generality we assume in this $\mathbb{P}^2$ that $P_0=[1:0:0]$, $P_1=[0:1:0]$ and $P_2=[0:0:1]$. We want to use the following inequality 
	\begin{equation} \label{bound1} \rho(I_{\mathbb{P}^2}(Z)) \geq \alpha(I_{\mathbb{P}^2}(Z))/ \widehat{\alpha}(I_{\mathbb{P}^2}(Z)), 
	\end{equation}
	which was proved in \cite[Theorem 1.2]{bocci2010resurgence}.
	From the part \textbf{(c)} of the last proposition we have $ \alpha (I_{\mathbb{P}^2}(Z))= (m_0+m_1+m_2+1)/2 $.
	Now, consider $m \in \mathbb{N}$ and the $2m$-th symbolic power $I_{\mathbb{P}^2}(Z)^{(2m)}$. Considering the definition of symbolic powers, 
	$$ I_{\mathbb{P}^2}(Z)^{(2m)}= \langle x_1,x_2\rangle^{2m \cdot m_0} \cap \langle x_0,x_2\rangle^{2m \cdot m_1} \cap \langle x_0,x_1\rangle^{2m \cdot m_2}.$$
Since $I_{\mathbb{P}^2}(Z)^{(2m)}=I_{\mathbb{P}^2}(2mZ)$ we have $\alpha(I_{\mathbb{P}^2}(Z)^{(2m)})=m(m_0+m_1+m_2)$ by Proposition \ref{prop33} (b). 	 Thus we obtain the Waldschmidt constant of $I_{\mathbb{P}^2}(Z)$ as follows:
	\begin{flalign} \label{const}
	 \widehat{\alpha}(I_{\mathbb{P}^2}(Z))&= \lim_{m \to \infty }{\frac{\alpha(I_{\mathbb{P}^2}(Z)^{(m)})}{m}}=\lim_{m \to \infty }{\frac{\alpha(I_{\mathbb{P}^2}(Z)^{(2m)})}{2m}}\\\nonumber
	  &= \lim_{m \to \infty }{\frac{m(\sum_{i=0}^2m_i)}{2m}}=\frac{\sum_{i=0}^2m_i}{2}.
	\end{flalign}  
	Hence by (\ref{const}) and (\ref{bound1}), 
	$$ \frac{1+\sum_{i=0}^2m_i}{\sum_{i=0}^2m_i} \leq  \frac{\alpha(I_{\mathbb{P}^2}(Z))}{\widehat{\alpha}(I_{\mathbb{P}^2}(Z))} \leq \rho(I_{\mathbb{P}^2}(Z)),$$
	and by Corollary \ref{boundd} the desired result is obtained.
\end{proof}
\section{Three noncollinear points in $\mathbb{P}^N$} \label{section4}
In this section we obtain additional results for the fat point scheme $Z=m_0P_0+m_1P_1+m_2P_2$ in $\mathbb{P}^N$, 
where $P_0, P_1$ and $P_2$ are noncollinear and each $m_i$ is a nonnegative integer. We can assume $P_0=[1:0:0:\dots : 0]$, $P_1=[0:1:0: \dots:0]$ and $P_2=[0:0:1:0: \dots:0]$ and $ \max(m_0,m_1) \leq m_2 $. Notice that the ideal $ I(P_i) $ is a square-free monomial ideal, and hence $ I(Z) $ is a monomial ideal. We are interested in computing the resurgence $\rho(I(Z))$ of the ideal $I(Z)$. In particular, we want to understand how the resurgence of the scheme $ Z $ depends on the values of the multiplicities $m_i$.

The following lemma gives some conditions for a monomial to belong to $I(Z)$.
\begin{lemma}
\label{general}
Let $P_0, P_1$ and $P_2$ be noncollinear points in $\mathbb{P}^N$ as above and $m_i \geq 0$. We define the fat point scheme $Z=m_0P_0+m_1P_1+m_2 P_2$. Then the monomial $\mathcal{N}=x_0^{a_0}x_1^{a_1}\cdots x_N^{a_N} \in I(Z)$ if and only if $(a_0,\ldots,a_N)$ satisfies the following system of inequalities
\begin{equation} 
\label{condiz}  
\operatorname{Cond}(Z):=\begin{cases}
a_1+a_2+a_3+\cdots+a_N \geq m_0 \\ a_0+a_2+a_3+\cdots+a_N \geq m_1 \\ a_0+a_1+a_3+\cdots+a_N \geq m_2.
	\end{cases}\end{equation}
\end{lemma}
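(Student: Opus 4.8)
The plan is to reduce the membership question to three separate membership questions in the powers of the coordinate primes, and then to settle each one by a direct combinatorial argument. First I would record that, with the chosen coordinates, the ideal of each point is a monomial prime generated by all variables except one: $I(P_0)=\langle x_1,\dots,x_N\rangle$, $I(P_1)=\langle x_0,x_2,\dots,x_N\rangle$, and $I(P_2)=\langle x_0,x_1,x_3,\dots,x_N\rangle$, since a linear form vanishes at a coordinate vertex exactly when the corresponding coefficient is zero. Consequently each power $I(P_i)^{m_i}$ is a monomial ideal, and so is the intersection $I(Z)=\bigcap_{i=0}^{2} I(P_i)^{m_i}$.

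The key step is the following elementary fact about powers of monomial primes: if $\mathfrak{p}=\langle x_j : j\in S\rangle$ for an index set $S$, then a monomial $\prod_k x_k^{a_k}$ lies in $\mathfrak{p}^m$ if and only if $\sum_{j\in S} a_j \geq m$. I would prove both directions directly. For the implication $(\Leftarrow)$, given $\sum_{j\in S} a_j \geq m$ one can choose nonnegative integers $b_j\leq a_j$ for $j\in S$ with $\sum_{j\in S} b_j = m$; then $\prod_{j\in S} x_j^{b_j}$ is a degree-$m$ generator of $\mathfrak{p}^m$ dividing the given monomial. For the implication $(\Rightarrow)$, since $\mathfrak{p}^m$ is a monomial ideal, membership forces the monomial to be divisible by one of the minimal generators, namely a degree-$m$ monomial in the variables indexed by $S$, and divisibility then yields $\sum_{j\in S} a_j\geq m$.

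Applying this fact to the three primes produces the three inequalities of $\operatorname{Cond}(Z)$: for $I(P_0)^{m_0}$ the relevant index set is $\{1,\dots,N\}$, giving $a_1+\dots+a_N\geq m_0$, and analogously for $I(P_1)^{m_1}$ and $I(P_2)^{m_2}$, omitting $a_1$ and $a_2$ respectively. Finally, because $I(Z)$ is a monomial ideal, a monomial lies in it precisely when it lies in each factor $I(P_i)^{m_i}$; hence $\mathcal{N}\in I(Z)$ is equivalent to the simultaneous validity of all three inequalities, which is exactly $\operatorname{Cond}(Z)$. The only point demanding any care—and thus the main, if mild, obstacle—is justifying the choice of the exponents $b_j$ in the $(\Leftarrow)$ direction and confirming that membership in an intersection of monomial ideals may be tested monomial by monomial; both are standard properties of monomial ideals.
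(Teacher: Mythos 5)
Your proposal is correct and follows the same route as the paper, which simply observes that $I(Z)=\bigcap_{i=0}^2 I(P_i)^{m_i}$ with each $I(P_i)$ the monomial prime generated by all variables except the one corresponding to the nonzero coordinate of $P_i$, and leaves the rest to the reader. You have merely written out the standard facts the paper treats as immediate (membership of a monomial in a power of a monomial prime, and testing membership in an intersection of monomial ideals monomial by monomial), and these details are all accurate.
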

\begin{proof}
The result easily follows from the fact that the ideal $I(Z)$ is the monomial ideal
$\bigcap_{i=0}^2 I(P_i)^{m_i}$ with $ I(P_0) = (x_1,x_2,x_3,...,x_N)$, $I(P_1) = (x_0,x_2,x_3,...,x_N)$ and $I(P_2) = (x_0,x_1,x_3,...,x_N) $.
\end{proof}
Notice that in the previous lemma, in order to simplify the notation, we made implicit the dependence of $ \operatorname{Cond}(Z) $ on $ m_0$, $m_1$ and $m_2 $.

We divide this section into two subsections where we study distinct configurations for the multiplicities $m_i$.
\subsection{Case $m_0+m_1\leq m_2$}
The aim of this subsection is to prove the following result.
\begin{proposition} \label{cor1}
	Let $P_0, P_1$ and $P_2$ be noncollinear points in $\mathbb{P}^N$ and $ m_2 \geq \max(m_0,m_1)$. Let $Z=\sum_{i=0}^2{m_iP_i} $ be a fat point scheme. If $m_0+m_1\leq m_2$, then $I(Z)^{(m)}=I(Z)^m \textrm{ for all } m \in \mathbb{N}$ and consequently $\rho(I(Z))=1$.	
\end{proposition}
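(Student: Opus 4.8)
The plan is to apply the splitting criterion of Lemma~\ref{criterion}. Choosing coordinates so that $P_0=[1:0:\cdots:0]$, $P_1=[0:1:0:\cdots:0]$ and $P_2=[0:0:1:0:\cdots:0]$, I would decompose $Z$ as $Z=Z_1+Z_2+Z_3$, where $Z_1=m_0P_0+m_0P_2$, $Z_2=m_1P_1+m_1P_2$ and $Z_3=(m_2-m_0-m_1)P_2$. This is legitimate precisely because $m_0+m_1\le m_2$, so that $Z_3$ has nonnegative multiplicity, and one checks at once that the multiplicities add back up to $(m_0,m_1,m_2)$. Each $Z_i$ is supported on a line (or at a single point), hence is a collinear fat point scheme, so Theorem~\ref{theo} gives $I(Z_i)^{(m)}=I(Z_i)^m$ for all $m$. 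By Lemma~\ref{criterion} it then remains only to verify the factorization hypothesis $I(kZ)=I(kZ_1)\,I(kZ_2)\,I(kZ_3)$ for every $k\in\mathbb{N}$.

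Since $kZ_1,kZ_2,kZ_3$ have the same shape as $Z_1,Z_2,Z_3$ with all multiplicities scaled by $k$ (and $km_0+km_1\le km_2$), it suffices to prove the identity $I(Z)=I(Z_1)I(Z_2)I(Z_3)$ for arbitrary multiplicities subject to $m_0+m_1\le m_2$. The inclusion ``$\supseteq$'' is the general fact that a product of forms vanishing at $P_0,P_1,P_2$ to the orders dictated by the $Z_i$ vanishes at $P_0$ to order $m_0$, at $P_1$ to order $m_1$, and at $P_2$ to order $m_0+m_1+(m_2-m_0-m_1)=m_2$; hence $\prod_iI(Z_i)\subseteq I(Z)$.

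The reverse inclusion ``$\subseteq$'' is the crux. All the ideals involved are monomial, so by Lemma~\ref{general} it is enough to take a monomial generator $\mathcal{N}=x_0^{a_0}\cdots x_N^{a_N}$ of $I(Z)$, i.e.\ one whose exponents satisfy $\operatorname{Cond}(Z)$, and exhibit a factorization $\mathcal{N}=\mathcal{N}_1\mathcal{N}_2\mathcal{N}_3$ with $\mathcal{N}_i\in I(Z_i)$. The guiding observation is that $x_1$ and $x_3,\ldots,x_N$ lie in $I(P_0)\cap I(P_2)$ (so they are ``dual-purpose'' for building $\mathcal{N}_1$), that $x_0$ and $x_3,\ldots,x_N$ lie in $I(P_1)\cap I(P_2)$ (dual-purpose for $\mathcal{N}_2$), that $x_0,x_1,x_3,\ldots,x_N\in I(P_2)$ feed $\mathcal{N}_3$, and that $x_2\in I(P_0)\cap I(P_1)$ can be used to supplement the order-at-$P_0$ and order-at-$P_1$ conditions. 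I would distribute the exponents of $\mathcal{N}$ among the three factors greedily, assigning each factor its dual-purpose variables first and calling on $x_2$ only to cover a shortfall, and then verify that each $\mathcal{N}_i$ meets the inequalities defining membership in $I(Z_i)$.

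The main obstacle is exactly this last distribution: one must satisfy five linear inequalities simultaneously (two for $\mathcal{N}_1$, two for $\mathcal{N}_2$, one for $\mathcal{N}_3$) using only the three inequalities of $\operatorname{Cond}(Z)$, the difficulty being the competition among the factors for the shared variables $x_2$ and $x_3,\ldots,x_N$. The hypothesis $m_0+m_1\le m_2$ is what resolves this: it forces the total multiplicity imposed at $P_2$ by $\mathcal{N}_1,\mathcal{N}_2,\mathcal{N}_3$ to equal $m_2$, which the third inequality of $\operatorname{Cond}(Z)$ guarantees is covered by the available non-$x_2$ exponents, so the greedy assignment never runs out of mass. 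Once the factorization is established, Lemma~\ref{criterion} closes the argument and yields $I(Z)^{(m)}=I(Z)^m$, whence $\rho(I(Z))=1$.
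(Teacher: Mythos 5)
Your proposal is correct and follows essentially the same route as the paper: the identical decomposition $Z=m_0(P_0+P_2)+m_1(P_1+P_2)+(m_2-m_0-m_1)P_2$ into collinear pieces, Theorem \ref{theo} for each piece, and Lemma \ref{criterion} applied to the product formula $I(kZ)=\prod_i I(kZ_i)$. The one place you only sketch --- the explicit distribution of exponents verifying $I(Z)\subseteq I(Z_1)I(Z_2)I(Z_3)$ --- is exactly the paper's Lemma \ref{theo3}, which carries out your greedy assignment via a case analysis on whether $a_1+\sum_{i\ge3}a_i<m_0$ or $a_0+\sum_{i\ge3}a_i<m_1$ (the cases where $x_2$ must be called on) and four subcases otherwise; your description of which variables are ``dual-purpose'' and of how the hypothesis $m_0+m_1\le m_2$ prevents the assignment from running out of mass matches that verification, so no idea is missing, only the bookkeeping.
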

  Proposition \ref{cor1} follows at once by accordingly using  Lemma \ref{criterion} if we can find a suitable splitting for the ideal $I(Z)$. As explained in Remark \ref{remark3} (following the proof of Lemma \ref{theo3} ), the following lemma gives a suitable splitting.
\begin{lemma} \label{theo3}
	Let $P_0, P_1$ and $P_2$ be noncollinear points in $\mathbb{P}^N$ and $m_2\geq\max(m_0,m_1). $ 
Consider the fat point scheme $Z=m_0P_0+m_1 P_1+m_2P_2 $.  If $m_0+m_1\leq m_2$, then $$ I(Z)=I(m_0(P_0+P_2)) \cdot I(m_1(P_1+P_2))\cdot I((m_2-m_0-m_1)P_2).$$
\end{lemma}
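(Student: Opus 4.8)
The plan is to reduce the whole statement to monomials and then produce an explicit factorization. Because $P_0,P_1,P_2$ are the first three coordinate points, every ideal in sight is monomial: $I(m_0(P_0+P_2))=I(P_0)^{m_0}\cap I(P_2)^{m_0}$, $I(m_1(P_1+P_2))=I(P_1)^{m_1}\cap I(P_2)^{m_1}$ and $I((m_2-m_0-m_1)P_2)=I(P_2)^{k}$ with $k:=m_2-m_0-m_1\ge 0$ by hypothesis, and so is $I(Z)$. Hence it suffices to check the equality on monomials, and for membership I will use the criterion of Lemma \ref{general}. The inclusion ``$\subseteq$'' is the easy one: for fat point schemes one always has $I(W)\,I(W')\subseteq I(W+W')$, since a form vanishing to order $a$ (resp.\ $a'$) at a point vanishes to order $a+a'$ in a product; as $m_0(P_0+P_2)+m_1(P_1+P_2)+(m_2-m_0-m_1)P_2=Z$, the product on the right is contained in $I(Z)$.

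For the reverse inclusion I fix a monomial $\mathcal N=x_0^{a_0}\cdots x_N^{a_N}\in I(Z)$. By Lemma \ref{general}, writing $d=\sum_i a_i$, its exponents satisfy $d-a_0\ge m_0$, $d-a_1\ge m_1$ and $d-a_2\ge m_2$. I must write $\mathcal N=A\cdot B\cdot C$ with $A\in I(P_0)^{m_0}\cap I(P_2)^{m_0}$, $B\in I(P_1)^{m_1}\cap I(P_2)^{m_1}$ and $C\in I(P_2)^{k}$; concretely, $A$ must have non-$x_0$ degree and non-$x_2$ degree both $\ge m_0$, $B$ must have non-$x_1$ degree and non-$x_2$ degree both $\ge m_1$, and $C$ must have non-$x_2$ degree $\ge k$.

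The construction I propose is as follows. Since the non-$x_2$ degree of $\mathcal N$ is $d-a_2\ge m_2=m_0+m_1+k$, I first carve the multiset of non-$x_2$ variables of $\mathcal N$ into three disjoint blocks $S_A,S_B,S_C$ of sizes $m_0,m_1,k$ (any surplus will be dumped into $C$). This already secures the three ``non-$x_2$'' conditions. It remains to meet the ``non-$x_0$'' condition for $A$ and the ``non-$x_1$'' condition for $B$, which I repair by distributing the $a_2$ available copies of $x_2$: attaching an $x_2$ to $A$ raises its non-$x_0$ degree, and attaching one to $B$ raises its non-$x_1$ degree, while leaving all non-$x_2$ degrees unchanged. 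To keep this cheap I fill $S_A$ using copies of $x_1$ first (then $x_3,\dots,x_N$, and only if forced, $x_0$), and fill $S_B$ using copies of $x_0$ first; with this priority at most one of the two ``forced'' counts, namely the number of $x_0$'s that must land in $S_A$ and the number of $x_1$'s that must land in $S_B$, is nonzero.

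The main obstacle, and the only place where the hypotheses really enter, is verifying that the $x_2$-budget suffices, i.e.\ that the sum of these two forced counts is at most $a_2$. I expect this to split exactly along the two cases flagged by the greedy: if $S_A$ is forced to use $x_0$'s, their number is $m_0-(a_1+T)$ with $T=a_3+\cdots+a_N$, and the required bound $m_0-(a_1+T)\le a_2$ is precisely $m_0\le a_1+a_2+T=d-a_0$, the first inequality of $\operatorname{Cond}(Z)$; symmetrically, if $S_B$ is forced to use $x_1$'s the bound reduces to $m_1\le d-a_1$, the second inequality. The nonnegativity $k\ge 0$, equivalent to $m_0+m_1\le m_2$, is what allows the three blocks to exist, and $d-a_2\ge m_2$ guarantees enough non-$x_2$ variables to build them. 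Once the budget check passes, setting $A=S_A$ together with the needed $x_2$'s, $B=S_B$ together with the needed $x_2$'s, and $C=S_C$ together with all remaining variables yields $\mathcal N=A\cdot B\cdot C$ with each factor in the prescribed ideal, which establishes ``$\supseteq$'' and completes the proof.
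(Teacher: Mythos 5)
Your proof is correct and follows essentially the same route as the paper's: both reduce to monomials via the criterion of Lemma \ref{general} and exhibit an explicit factorization by distributing the non-$x_2$ variables of $\mathcal N$ among the three factors, using copies of $x_2$ to repair the deficient ``non-$x_0$'' and ``non-$x_1$'' conditions, with the inequalities of $\operatorname{Cond}(Z)$ supplying exactly the needed budget. The paper writes this out as an explicit case analysis (\textbf{(a)} $a_1+b<m_0$, \textbf{(b)} $a_0+b<m_1$, and the four subcases of \textbf{(c)}), which your greedy priority rule packages more compactly but with the same underlying combinatorics.
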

\begin{proof}
	Notice that, if $ m_2=m_0+m_1 $, then $ I((m_2-m_0-m_1)P_2)=R $ thus the desired splitting  in this case is $ I(Z)=I(m_0(P_0+P_2)) \cdot I(m_1(P_1+P_2))$.
Set $Z_1=m_0(P_0+P_2)$, $Z_2=m_1(P_1+P_2)$ and $Z_3=(m_2-m_0-m_1)P_2$. The inclusion $ I(Z_1) \cdot I(Z_2)\cdot I(Z_3) \subseteq I(Z)$ is trivial since $Z=Z_1+Z_2+Z_3$. Now, we show the other inclusion holds.
Thus, let us consider a  monomial $\mathcal{N}=x_0^{a_0}x_1^{a_1}\cdots x_N^{a_N} \in I(Z)$ where the $a_i$'s satisfy the system $\textrm{Cond(Z)}$, and set $b=\sum_{i=3}^N{a_i}.$ We have the following cases:\\
\textbf{(a)} Assume $a_1+b < m_0$. By $\textrm{Cond(Z)}$ it follows that $a_2 \geq m_0-a_1-b >0 $ and $a_0 \geq m_2-a_1-b= (m_0-a_1-b)+m_1+(m_2-m_0-m_1)$. Then the monomial
$$  (x_0^{m_0-a_1-b}x_1^{a_1}x_2^{m_0-a_1-b}x_3^{a_3}\cdots x_N^{a_N}) \cdot (x_0^{m_1})\cdot (x_0^{m_2-m_0-m_1}) $$ divides $\mathcal{N}$ and belongs to $ I(Z_1)\cdot I(Z_2)\cdot I(Z_3)$ because the $j$-th part of the above product is in $I(Z_j)$ by $\textrm{Cond}(Z_j)$ for $j=1,2,3$.\\
\textbf{(b)} Assume that $ a_0+b<m_1$. The proof is similar to the previous case by using $ a_0+b<m_1$.\\
\textbf{(c)} Consider $a_1+b \geq m_0$ and $ a_0+b \geq m_1$ and the following four cases:
\begin{itemize}
\item[\textbf{(1)}] Assume $ a_1 \geq m_0$ and $ a_0\geq m_1$. We can write $ \mathcal{N} $ as
$$(x_1^{m_0}) \cdot (x_0^{m_1})\cdot (x_0^{a_0-m_1}x_1^{a_1-m_0}x_2^{a_2}x_3^{a_3}\cdots x_N^{a_N}).$$
The first two factors belong respectively to $I(Z_1)$ and $I(Z_2)$ while the third one is in $ I(Z_3) $ because $\operatorname{Cond}(Z)$ implies $  a_0+a_1+a_3+\cdots+a_N-m_1-m_0 \geq m_2-m_1-m_0 $.
Hence, $\textrm{Cond}(Z_3)$ is satisfied.\\
\item[\textbf{(2)}] Assume $a_1<m_0$ and $a_0 \geq m_1$. By $ a_1+b \geq m_0$, we deduce 
$ \sum_{i=3}^N{a_i} \geq m_0-a_1. $
Then, for each $i=3,\ldots,N$, we can choose $0\leq b_i \leq a_i$ such that $ \sum_{i=3}^{N}{b_i}=\nolinebreak m_0-a_1. $
It can be written
$ \mathcal{N}=(x_1^{a_1}x_3^{b_3}\cdots x_{N}^{b_N}) \cdot (x_0^{m_1})\cdot (x_0^{a_0-m_1}x_2^{a_2}x_3^{a_3-b_3}\cdots x_N^{a_N-b_N}),  $ where it is easy to check that the first two factors belong respectively to $I(Z_1)$ and $I(Z_2)$ while
the third term is in $I(Z_3)$ because $\textrm{Cond}(Z)$ implies that
$a_0+a_3+\cdots+a_N-m_1-\sum_{i=3}^{N}{b_i}=a_0+a_1-m_1-m_0+\sum_{i=3}^N{a_i} \geq m_2-m_1-m_0 $. So, $\textrm{Cond}(Z_3)$ is satisfied.
\item[\textbf{(3)}]Assume $a_1 \geq m_0$ and $a_0 < m_1$. The proof of this case is similar to the proof of the previous one.
\item[\textbf{(4)}]
Assume $a_1 < m_0$ and $a_0 < m_1$. $\textrm{Cond}(Z)$ implies that
\begin{flalign*}
&\sum_{i=3}^{N}{a_i}=b\\
&=(a_0+a_1+b-m_0-m_1)+(m_0-a_1)+(m_1-a_0)\\
&\geq (m_2-m_1-m_0)+(m_0-a_1)+(m_1-a_0).
\end{flalign*}
Because the  last three summands are all positive we can choose for all $i=3,\ldots,N$, some integers $ 0\leq c_i,d_i,e_i \leq a_i$ such that 
$c_i+d_i+e_i \leq a_i$ for all $ i=3,\ldots,N $, $ \sum_{i=3}^{N}{d_i}=m_1-a_0$, $\sum_{i=3}^{N}{c_i}=m_0-a_1$, and $ \sum_{i=3}^{N}{e_i}=m_2-m_1-m_0.$ So, the monomial $ \mathcal{M}= (x_1^{a_1}x_3^{c_3}\cdots x_{N}^{c_N}) \cdot (x_0^{a_0}x_3^{d_3}\cdots x_{N}^{d_N})\cdot (x_3^{e_i}\cdots x_N^{e_N})   $ divides $\mathcal{N}$ and belongs to $ I(Z_1)\cdot I(Z_2)\cdot I(Z_3)$ because its $j$-th factor belongs to $I(Z_j)$ by $\textrm{Cond}(Z_j)$ for $j=1,2,3$.
\end{itemize}
\end{proof}
\begin{remark} \label{remark3}
Notice that the splitting presented in the previous lemma satisfies the condition of Lemma \ref{criterion}. In fact the ideals involved in the product are ideals of   fat point schemes whose support consists of collinear points, and by means of Theorem \ref{theo} we  have 
$$	I(Z_i)^{(m)}=I(Z_i)^m \textrm{ for all } m.$$
Furthermore,  Lemma \ref{theo3}  can be applied to the fat point scheme $kZ$ where $k \in \mathbb{N}$, deducing that
$$ I(kZ)=\prod_{i=1}^3I(kZ_i).$$
\end{remark}
\subsection{Case $m_0+m_1 > m_2$}
In this subsection, we deal with the case $m_0+m_1 > m_2$ showing how the value of the resurgence depends on the parity of the sum $\sum_{i=0}^2{m_i}$. 
Using the same approach as in  the previous subsection, we want to split the ideal $I(Z)$ in a convenient way as a product of ideals $I(Z_i)$.
\begin{lemma} \label{theo5}
Let $P_0, P_1$ and $P_2$ be noncollinear points in $\mathbb{P}^N$ and $m_2 \geq \max(m_0,m_1)$. We consider the scheme $Z=m_0P_0+m_1 P_1+m_2P_2$. If $m_0+m_1 > m_2$, then $ I(Z)= I(Z_1) \cdot I(Z_2)\cdot I(Z_3) $ where,
\begin{itemize}
	\item $Z_1=(m_0+m_1-m_2)(P_0+P_1+P_2)$
	\item $Z_2=(m_2-m_1)(P_0+P_2)$
	\item $ Z_3=(m_2-m_0)(P_1+P_2)$.
\end{itemize}
\end{lemma}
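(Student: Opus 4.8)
The plan is to prove the factorization by the same method used for Lemma \ref{theo3}: reduce everything to monomials and split exponent vectors. First I would record the multiplicity bookkeeping showing $Z = Z_1 + Z_2 + Z_3$: at $P_0$ the multiplicity is $(m_0+m_1-m_2)+(m_2-m_1)=m_0$, at $P_1$ it is $(m_0+m_1-m_2)+(m_2-m_0)=m_1$, and at $P_2$ it is $(m_0+m_1-m_2)+(m_2-m_1)+(m_2-m_0)=m_2$, and that the integers $k_1=m_0+m_1-m_2$, $k_2=m_2-m_1$, $k_3=m_2-m_0$ are all nonnegative under the hypotheses $m_0+m_1>m_2$ and $\max(m_0,m_1)\le m_2$. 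This gives at once the easy inclusion $I(Z_1)\cdot I(Z_2)\cdot I(Z_3)\subseteq I(Z)$, since a product of forms vanishing to orders summing to the multiplicity of $Z$ at each $P_i$ lies in $I(Z)$. I would also record the identities $k_1+k_2=m_0$, $k_1+k_3=m_1$ and $k_1+k_2+k_3=m_2$, to be used repeatedly.

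For the reverse inclusion $I(Z)\subseteq I(Z_1)\cdot I(Z_2)\cdot I(Z_3)$ all four ideals are monomial, so it suffices to treat a monomial $\mathcal{N}=x_0^{a_0}\cdots x_N^{a_N}\in I(Z)$, whose exponents satisfy $\operatorname{Cond}(Z)$ by Lemma \ref{general}. By the analogue of Lemma \ref{general}, a monomial $\prod x_i^{\alpha_i}$ lies in $I(Z_1)$ iff it vanishes to order $k_1$ at each of $P_0,P_1,P_2$, one in $I(Z_2)$ iff it vanishes to order $k_2$ at $P_0$ and $P_2$, and one in $I(Z_3)$ iff it vanishes to order $k_3$ at $P_1$ and $P_2$. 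The goal is to produce exponent vectors $\alpha,\beta,\gamma$ with $\alpha_i+\beta_i+\gamma_i\le a_i$ for all $i$, satisfying the conditions for $Z_1,Z_2,Z_3$ respectively; since $I(Z_1)\cdot I(Z_2)\cdot I(Z_3)$ is an ideal, it is enough to find such a divisor of $\mathcal{N}$.

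The allocation is greedy, prioritizing the variables that vanish at both relevant points. For $Z_2$ (vanishing at $P_0,P_2$) the natural variables are $x_1$ and the free variables $x_3,\dots,x_N$; for $Z_3$ (vanishing at $P_1,P_2$) they are $x_0$ and the free variables; the distinguished variable $x_2$ and the remainder go to $Z_1$. When $a_1\ge k_2$ and $a_0\ge k_3$ this works cleanly: using the three inequalities of $\operatorname{Cond}(Z)$ together with $k_1+k_2=m_0$, $k_1+k_3=m_1$, $k_1+k_2+k_3=m_2$, one checks that the leftover $x_0^{a_0-k_3}x_1^{a_1-k_2}x_2^{a_2}\prod_{i\ge 3}x_i^{a_i}$ vanishes to order $k_1$ at all three points. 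In the deficient cases ($a_1<k_2$ and/or $a_0<k_3$) the natural variables run short and one must fall back on the single-point variables — using $x_2$ to meet the $P_0$-demand of $Z_2$ and $x_0$ to meet its $P_2$-demand, symmetrically for $Z_3$ — and on the free variables; I would organize these into cases on the signs of $a_1-k_2$ and $a_0-k_3$ and exhibit an explicit factorization in each, in the style of the case analysis of Lemma \ref{theo3}.

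The main obstacle is precisely this deficient regime, because $x_2$ and the free variables are simultaneously demanded by $Z_1$, $Z_2$ and $Z_3$, and one must show their combined budget suffices; the decisive input is the inequality $a_0+a_1+\sum_{i\ge 3}a_i\ge m_2$ from $\operatorname{Cond}(Z)$, which governs the common $P_2$-demand where all three schemes require vanishing. One must resist splitting $Z_2$ with $x_1$ alone and $Z_3$ with $x_0$ alone: this naive rule already fails for $\mathcal{N}=x_0^6x_2^4$ with $k_1=k_2=k_3=2$, where $x_1$ is absent and the only valid choice is the cross-substituted $Z_2=x_0^2x_2^2$, $Z_3=x_0^2$, $Z_1=x_0^2x_2^2$. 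Verifying that such cross-substitutions can always be made consistently, with no coordinate exponent over-committed, is the delicate point, and it is exactly what the case analysis is designed to secure.
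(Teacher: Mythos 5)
Your proposal follows essentially the same route as the paper's proof: the trivial inclusion from $Z=Z_1+Z_2+Z_3$, reduction to monomials via Lemma \ref{general}, and a greedy allocation with a case analysis on whether $a_1$ and $a_0$ fall short of $k_2=m_2-m_1$ and $k_3=m_2-m_0$, the shortfalls being covered first by the free variables $x_3,\dots,x_N$ and then by cross-substituted factors such as $(x_0x_2)^{k_2-a_1-b}$ — exactly the paper's cases (a), (b) and (c)(1)–(4). Your clean-case verification and your example $x_0^6x_2^4$ are correct and pinpoint the same delicate step the paper resolves, so filling in the remaining deficient subcases as you describe yields the paper's argument.
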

\begin{proof}
The inclusion $ I(Z_1) \cdot I(Z_2)\cdot I(Z_3) \subseteq I(Z)$ is trivial since $Z=Z_1+Z_2+Z_3$. We just need to show that if a monomial $\mathcal{N}=x_0^{a_0}x_1^{a_1}\cdots x_N^{a_N} \in I(Z), $ then $\mathcal{N}\in I(Z_1)\cdot I(Z_2)\cdot I(Z_3)$.
Thus, suppose $\mathcal{N} \in I(Z)$, and set $b= \sum_{i=3}^N{a_i}.$ We have the following cases:\\
\textbf{(a)} Let $a_1+b<m_2-m_1$. Considering the system $\textrm{Cond}(Z)$,
\begin{itemize}
\item $ a_2 \geq m_0-a_1-b=(m_0+m_1-m_2)+(m_2-m_1-a_1-b) $ 
\item $ a_0 \geq m_2-a_1-b=(m_0+m_1-m_2)+(m_2-m_1-a_1-b)+(m_2-m_0)$
\end{itemize}
where all the numbers between parenthesis are nonnegative. So,  the  monomial $\mathcal{M}=\left((x_0x_2)^{m_0+m_1-m_2}\right)  \cdot \left((x_0x_2)^{m_2-m_1-a_1-b}x_1^{a_1}x_3^{a_3}\cdots x_{N}^{a_N}\right)\cdot \left(x_0^{m_2-m_0} \right)   $
divides $\mathcal{N}$. Furthermore, $\mathcal{M}$ belongs to $ I(Z_1)\cdot I(Z_2)\cdot I(Z_3)$ because the $j$-th factor belongs to $I(Z_j)$ by $\textrm{Cond}(Z_j)$ for $j=1,2,3$. Thus $\mathcal{N} \in I(Z_1)\cdot I(Z_2)\cdot I(Z_3)$.\\
\textbf{(b)}   $a_0+b < m_2-m_0$: the proof is similar to previous case using $ a_0+b<m_2-m_0$.\\
\textbf{(c)}  $ a_1+b \geq m_2-m_1$ and $a_0+b \geq m_2-m_0$: we have four subcases,
\begin{itemize}
\item[\textbf{(1)}]$a_1 \geq m_2-m_1$ and $a_0 \geq m_2-m_0$: we can write
 $\mathcal{N}=(x_0^{a_0-m_2+m_0}x_1^{a_1-m_2+m_1}x_2^{a_2}x_3^{a_3}\cdots x_N^{a_N}) \cdot (x_1^{m_2-m_1})\cdot ( x_0^{m_2-m_0})$ where the first factor is in $I(Z_1)$ because $\textrm{Cond}(Z)$ implies
 \begin{itemize}
 	\item[$\bullet$] $ a_1+a_2+a_3+\cdots+a_N-m_2+m_1 \geq m_0+m_1-m_2 $
 	\item[$\bullet$] $ a_0+a_2+a_3+\cdots+a_N-m_2+m_0 \geq m_0+m_1-m_2 $
 	\item[$\bullet$] $ a_0+a_1+a_3+\cdots+a_N-2m_2+m_0+m_1 \geq m_0+m_1-m_2 $
 \end{itemize}
So, $\textrm{Cond}(Z_1)$ is satisfied. Furthermore, it is easy to check that $x_1^{m_2-m_1} \in I(Z_2)$ and $x_0^{m_2-m_0} \in \nolinebreak I(Z_3)$. Thus $\mathcal{N} \in  I(Z_1)\cdot I(Z_2)\cdot I(Z_3)$.
\item[\textbf{(2)}] $a_1 \geq m_2-m_1$ and $a_0 < m_2-m_0$: $ a_0+b \geq m_2-m_0$ implies that $ \sum_{i=3}^N{a_i} \geq m_2-m_0-a_0>0 $.
For each $i=3,\ldots,N$ we can choose $0\leq b_i \leq a_i$ such that $ \sum_{i=3}^{N}{b_i}=m_2-m_0-a_0. $
We can write
$ \mathcal{N}=(x_1^{a_1-m_2+m_1}x_2^{a_2}x_3^{a_3-b_3}\cdots x_{N}^{a_N-b_N}) \cdot (x_1^{m_2-m_1})\cdot (x_0^{a_0}x_3^{b_3}\cdots x_N^{b_N}),  $
where the first factor is in 
$ I(Z_1)$ because by $\textrm{Cond}(Z)$, it follows
\begin{itemize}
	\item[$\bullet$] $a_1+a_2+b-m_2+m_1-\sum_{i=3}^N{b_i}=-2m_2+m_0+m_1+\sum_{i=0}^{N}{a_i} \geq  m_0+m_1-m_2  $
	\item[$\bullet$] $ a_2+b-\sum_{i=3}^N{b_i} =a_0+m_0-m_2+\sum_{i=2}^N{a_i} \geq m_0+m_1-m_2 $
	\item[$\bullet$] $ a_1+b+m_1-m_2-\sum_{i=3}^N{b_i}=a_0+a_1 +m_0+m_1-2m_2+ \sum_{i=3}^{N}{a_i} \geq m_0+m_1-m_2. $
\end{itemize}
So, the conditions at $\textrm{Cond}(Z_1)$ are satisfied. As we have seen in the previous subcase, the second factor belongs to $I(Z_2)$. Furthermore, it is easy to check, using $\textrm{Cond}(Z_3)$, that $x_0^{a_0}x_3^{b_3}\cdots x_N^{b_N} \in \nolinebreak I(Z_3)$. Thus $\mathcal{N} \in  I(Z_1)\cdot I(Z_2)\cdot I(Z_3)$.
\item[\textbf{(3)}] $a_1 < m_2-m_1$ and $a_0 \geq m_2-m_0$: the proof  is similar to the previous one.
\item[\textbf{(4)}] $a_1 < m_2-m_1$ and $a_0 < m_2-m_0$: by $\textrm{Cond}(Z)$, we have	
$b=\sum_{i=3}^{N}{a_i}
=(a_1+a_0+b+m_0+m_1-2m_2)+(m_2-m_1-a_1)
+(m_2-m_0-a_0)\geq (m_0+m_1-m_2)+(m_2-m_1-a_1)
+(m_2-m_0-a_0)$. Because the last three summands are all positive, it is possible to choose for all $i=3,\ldots,N$ some integers $ 0\leq c_i, d_i, e_i \leq a_i$ such that $ c_i+d_i+e_i \leq a_i$ for all $ i=3,\ldots,N$, $ \sum_{i=3}^{N}{c_i}=m_0+m_1-m_2 $, $ \sum_{i=3}^{N}{d_i}=m_2-m_1-a_1$ and $ \sum_{i=3}^{N}{e_i}=m_2-m_0-a_0 $.
By $\textrm{Cond}(Z_i)$, it  follows that 
$$ \mathcal{M}=(x_3^{c_3}\cdots x_{N}^{c_N}) \cdot (x_1^{a_1}x_3^{d_3}\cdots x_{N}^{d_N})\cdot (x_0^{a_0}x_3^{e_i}\cdots x_N^{e_N}) $$ belongs to $ I(Z_1)\cdot I(Z_2)\cdot I(Z_3). $ Since $\mathcal{M}$ divides $\mathcal{N}$, we deduce $\mathcal{N} \in I(Z_1)\cdot I(Z_2)\cdot \nolinebreak I(Z_3)$. 
\end{itemize}
So, in all the possible cases,  $\mathcal{N} \in  I(Z_1)\cdot I(Z_2)\cdot I(Z_3)$ and
the proof of the lemma is complete.\qedhere
\end{proof}
The next lemma helps us to deal with subschemes of the type $(2q+r)(P_0+P_1+P_2)$  that appeared as a factor in the splitting presented in Lemma \ref{theo5}.
\begin{lemma} \label{lemma31}
	Let $P_0, P_1$ and $P_2$ be three noncollinear points in $\mathbb{P}^N$. If  $q,r \in \mathbb{N}$ with $0\leq r <2$, then
	$ I((2q+r)(P_0+P_1+P_2))=I(2(P_0+P_1+P_2))^q\cdot I(P_0+P_1+P_2)^r.$
\end{lemma}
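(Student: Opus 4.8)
The plan is to prove the identity by induction on $q$, reducing everything to a single ``peeling'' step. Write $T=P_0+P_1+P_2$. The base cases $q=0$ are trivial, since $I(0\cdot T)=R=I(T)^0$ and $I(T)=I(T)^1$. For the inductive step it suffices to establish, for every integer $m\ge 2$, the factorization
\[
I(mT)=I(2T)\cdot I((m-2)T),
\]
because then, applying the induction hypothesis to $2(q-1)+r$, one gets $I((2q+r)T)=I(2T)\cdot I((2q-2+r)T)=I(2T)\cdot I(2T)^{q-1}I(T)^r=I(2T)^qI(T)^r$.

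For this factorization the inclusion $\supseteq$ is immediate from the trivial fact that $Z=Z_1+Z_2$ forces $I(Z_1)I(Z_2)\subseteq I(Z)$ (here $mT=2T+(m-2)T$), exactly as in the previous lemmas. The content is the reverse inclusion $\subseteq$. Since all the ideals involved are monomial, it is enough to show that every monomial $\mathcal{N}=x_0^{a_0}\cdots x_N^{a_N}\in I(mT)$ lies in $I(2T)\cdot I((m-2)T)$, and for this it suffices to exhibit a monomial divisor $\mathcal{B}\mid\mathcal{N}$ with $\mathcal{B}\in I(2T)$ and $\mathcal{N}/\mathcal{B}\in I((m-2)T)$. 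By Lemma \ref{general}, membership $\mathcal{N}\in I(kT)$ is equivalent to $\max(a_0,a_1,a_2)\le \deg\mathcal{N}-k$. Using the symmetry of $I(kT)$ under permutations of $x_0,x_1,x_2$, I would assume $a_0\ge a_1\ge a_2$, so that the hypothesis $\mathcal{N}\in I(mT)$ reads simply $a_0\le \deg\mathcal{N}-m$.

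The key idea is to choose $\mathcal{B}$ so that it uses exactly $\deg\mathcal{B}-2$ copies of the largest coordinate $x_0$. Then $\mathcal{B}\in I(2T)$ (its $\max$ is attained at $b_0=\deg\mathcal{B}-2$), and removing $\mathcal{B}$ lowers $a_0$ by exactly $\deg\mathcal{B}-2$ while keeping $x_0$ the dominant coordinate of the quotient, so the inequality defining $I((m-2)T)$ for $\mathcal{N}/\mathcal{B}$ collapses back to the original $a_0\le\deg\mathcal{N}-m$. Concretely I would split into cases according to $w:=a_3+\cdots+a_N$ and whether $a_2$ vanishes: if $a_2\ge 1$ take $\mathcal{B}=x_0x_1x_2$; if $a_2=0$ and $w\ge 2$ take $\mathcal{B}$ to be a product of two higher variables dividing $\mathcal{N}$; if $a_2=0$ and $w=1$ take $\mathcal{B}=x_0x_1x_\ell$ with $x_\ell$ the higher variable present (here $a_1+w\ge m$ forces $a_1\ge 1$); and if $a_2=0$ and $w=0$ take $\mathcal{B}=x_0^2x_1^2$ (here $\mathcal{N}\in I(mT)$ forces $a_1\ge m\ge 2$). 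In every case one checks $\mathcal{B}\mid\mathcal{N}$, $\mathcal{B}\in I(2T)$, and that the remainder's defining inequality is precisely $a_0\le\deg\mathcal{N}-m$.

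The main obstacle is precisely this case analysis: the naive attempt to always peel off two higher variables fails when $w\le 1$, and then one is forced to peel from the coordinates $x_0,x_1,x_2$ themselves. The delicate point is verifying, in the scarce and degenerate cases, that the remainder still satisfies its membership inequality, notably when $w=0$ (where $I(2T)$ contains no quadratic monomial and one must remove the degree-$4$ monomial $x_0^2x_1^2$) and in the tight case $a_0=\deg\mathcal{N}-m$. The unifying principle of ``removing exactly $\deg\mathcal{B}-2$ copies of $x_0$'' is what makes all four cases succeed uniformly.
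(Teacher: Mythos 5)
Your proof is correct. It shares the paper's overall skeleton --- induction on $q$ reduced to the single peeling identity $I(mT)=I(2T)\cdot I((m-2)T)$, verified monomial by monomial via the membership criterion of Lemma \ref{general} --- but the case analysis for the peeling step is genuinely different and, in one respect, stronger. The paper splits on $b=\sum_{i\ge 3}a_i$ and, in the residual case $b=0$ (a monomial in $x_0,x_1,x_2$ only), does not argue directly: it invokes the known $\mathbb{P}^2$ identity from the end of Section 6 of \cite{boccicooper}. You instead normalize $a_0\ge a_1\ge a_2$ by symmetry, collapse the three membership inequalities to the single condition $a_0\le\deg\mathcal{N}-m$, and split on $a_2$ and $w$; your choice of divisor $\mathcal{B}$ with exactly $\deg\mathcal{B}-2$ copies of the dominant variable makes the remainder's inequality reduce identically to the hypothesis in every case, and in particular your case $a_2=w=0$ (peeling $x_0^2x_1^2$ from $x_0^{a_0}x_1^{a_1}$ with $a_0,a_1\ge m$) disposes of the two-variable situation in two lines. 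The net effect is a self-contained proof that eliminates the external citation; the paper's version, by contrast, needs fewer ad hoc choices of divisor in the $b\ge 2$ and $b=1$ cases because it does not insist on ordering the first three exponents. All four of your cases check out (in each, $\mathcal{B}\in I(2T)$ and the quotient's condition is exactly $a_0\le\deg\mathcal{N}-m$), and the coverage $\{a_2\ge1\}\cup\{a_2=0,\,w\ge2\}\cup\{a_2=0,\,w=1\}\cup\{a_2=0,\,w=0\}$ is exhaustive, so there is no gap.
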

\begin{proof}
We give a proof by induction on $q$. In order to prove the base case $q=0$, we need to show that
$ I(r(P_0+P_1+P_2))=I(P_0+P_1+P_2)^r$.
But this is trivial for $r=0,1$. For the induction, we suppose that the lemma is true for $q-1$ and we prove that it holds for $q$. We claim that 
$$ I((2q+r)(P_0+P_1+P_2))=I(2(P_0+P_1+P_2))\cdot I((2(q-1)+r)(P_0+P_1+P_2)).$$
\noindent \textbf{Proof of the claim.} \\Set $ 
Z_1=(2q+r)(P_0+P_1+P_2)$, $Z_2=2(P_0+P_1+P_2)$ and $ Z_3=(2(q-1)+r)(P_0+P_1+P_2)$. The inclusion $ I(Z_2) \cdot I(Z_3) \subseteq I(Z_1)$ is trivial from the definition. Therefore, we  show that if a monomial $\mathcal{N}=x_0^{a_0}x_1^{a_1}\cdots x_N^{a_N} \in I(Z_1) $
then $ \mathcal{N} \in I(Z_2)\cdot I(Z_3)$.
Thus, consider $\mathcal{N} \in I(Z_1)$. Set $b= \sum_{i=3}^N{a_i}.$ We have the following cases:\\
\textbf{(a)} Let $  b \geq 2 $, for each $i=3,\ldots,N$, it can be chosen $0\leq b_i \leq a_i$ such that $ \sum_{i=3}^{N}{b_i}=2. $
If we write
$ \mathcal{N}=(x_3^{b_3}\cdots x_{N}^{b_N}) \cdot (x_0^{a_0}x_1^{a_1}x_2^{a_2}x_3^{a_3-b_3}\cdots x_{N}^{a_N-b_N})$, then we can easily deduce by $\textrm{Cond}(Z_2)$ and $\textrm{Cond}(Z_3)$ that $\mathcal{N} \in  I(Z_2)\cdot I(Z_3)$.\\
\textbf{(b)} Let $b=1$. We have three subcases.
\begin{itemize}
	\item[\textbf{(1)}]
$a_0=0$: by $\textrm{Cond}(Z_1)$ it follows that 
\begin{eqnarray}\label{eq1}
a_1 \geq 2q+r-1 \geq 1  \textrm{ and } a_2 \geq 2q+r-1 \geq 1.
\end{eqnarray}
Therefore,
$ \mathcal{N}=(x_1x_2x_3^{a_3}\cdots x_{N}^{a_N}) \cdot (x_1^{a_1-1}x_2^{a_2-1}) $,
where  it is easy to see that the  first factor is in $I(Z_2)$.  So we need to show that the second one is in $I(Z_3)$. By $\textrm{Cond}(Z_1)$ it follows
\begin{itemize}
\item[$\bullet$]  $ a_j-1 \geq2(q-1)+r $ for $j=1,2$\ by (\ref{eq1})
\item [$\bullet$] $ a_1+a_2-2\geq a_2-1 \geq 2(q-1)+r $\ by (\ref{eq1}).
\end{itemize}
Thus the conditions  at $\textrm{Cond}(Z_3)$ are satisfied.
\item[\textbf{(2)}] $a_1=0$ and $a_2=0$:	these cases are similar to the previous one.
\item[\textbf{(3)}] $ a_0,a_1,a_2>0$:
we can write 
$$ \mathcal{N}=(x_0x_1x_2) \cdot (x_0^{a_0-1}x_1^{a_1-1}x_2^{a_2-1}x_3^{a_3}\cdots x_{N}^{a_N}),  $$ where it is easy to prove that the two factors belong  to $I(Z_2)$ and $I(Z_3)$ respectively.
\end{itemize}
\textbf{(c)} If we consider $ b=0$, then it is a known case in $ \mathbb{P}^2 $ (see the end of section 6 in \cite{boccicooper}). Using the canonical inclusion $ R_{\mathbb{P}^2}\subseteq R_{\mathbb{P}^N} $ we get $ I_{\mathbb{P}^2}(Z_i)\subset I_{\mathbb{P}^N}(Z_i) $. Hence, we have $$ \mathcal{N}\in I_{\mathbb{P}^2}(Z_1)=I_{\mathbb{P}^2}(Z_2)\cdot I_{\mathbb{P}^2}(Z_3)\subset I_{\mathbb{P}^N}(Z_2)\cdot I_{\mathbb{P}^N}(Z_3).$$
So, the proof of the claim is complete. By the inductive step 
\begin{flalign*}
&I((2q+r)(P_0+P_1+P_2))\\
&=I(2(P_0+P_1+P_2))\cdot I((2(q-1)+r)(P_0+P_1+P_2))\\
&=I(2(P_0+P_1+P_2))\cdot I(2(P_0+P_1+P_2))^{q-1}\cdot I(P_0+P_1+P_2)^r,
\end{flalign*}
and the proof  is complete.
\end{proof}
Now, we can solve our main problem when  $\sum_{i=0}^2{m_i}$ is even. 
\begin{proposition} \label{cor7}
Let $P_0, P_1$ and $P_2$ be noncollinear points in $\mathbb{P}^N$ and $m_0 \leq m_1 \leq m_2 $. Denote by $Z$ the corresponding fat point scheme $Z=m_0P_0+m_1 P_1+m_2P_2$. If $m_0+m_1> m_2$ and $m_0+m_1+m_2$ is even, then $ I(Z)^{(m)}=I(Z)^m \textrm{ for all } m \in \mathbb{N}$ and hence $\rho(I(Z))=1$.
\end{proposition}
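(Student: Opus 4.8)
The plan is to invoke the general criterion of Lemma \ref{criterion} using the three-factor splitting of $I(Z)$ supplied by Lemma \ref{theo5}. Writing $Z_1=(m_0+m_1-m_2)(P_0+P_1+P_2)$, $Z_2=(m_2-m_1)(P_0+P_2)$ and $Z_3=(m_2-m_0)(P_1+P_2)$, the hypothesis $m_0+m_1>m_2$ makes the splitting $I(Z)=I(Z_1)I(Z_2)I(Z_3)$ available. Moreover, replacing $Z$ by $kZ$ replaces each $m_i$ by $km_i$, and since $km_0+km_1>km_2$ still holds, Lemma \ref{theo5} applies verbatim to $kZ$ and yields $I(kZ)=I(kZ_1)I(kZ_2)I(kZ_3)$ for every $k\in\mathbb{N}$; this is exactly condition (\ref{crit}). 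So it remains only to check that each factor $Z_i$ satisfies $I(Z_i)^{(m)}=I(Z_i)^m$ for all $m$.

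First, $Z_2$ and $Z_3$ are supported on two points each (or are empty when $m_2=m_1$ or $m_2=m_0$, in which case the corresponding ideal is $R$), hence on a line; by Theorem \ref{theo} they satisfy $I(Z_i)^{(m)}=I(Z_i)^m$ for all $m$. The only delicate factor is $Z_1=t(P_0+P_1+P_2)$ with $t=m_0+m_1-m_2$. Here the parity hypothesis enters: since $t=(m_0+m_1+m_2)-2m_2$ and $m_0+m_1+m_2$ is even, $t$ is even. This is the crucial point of the argument — it is precisely evenness of $t$ that allows Lemma \ref{lemma31} to express the relevant ideals as pure powers with no leftover reduced factor.

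Concretely, for any $m$ I would compute $I(Z_1)^{(m)}=I(mZ_1)=I(mt\,(P_0+P_1+P_2))$. As $t$ is even, $mt$ is even, so Lemma \ref{lemma31} (with $r=0$) gives $I(mt(P_0+P_1+P_2))=I(2(P_0+P_1+P_2))^{mt/2}$. On the other hand, $I(Z_1)^m=I(t(P_0+P_1+P_2))^m=\big(I(2(P_0+P_1+P_2))^{t/2}\big)^m=I(2(P_0+P_1+P_2))^{mt/2}$, again by Lemma \ref{lemma31}. Comparing the two expressions gives $I(Z_1)^{(m)}=I(Z_1)^m$ for all $m$.

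With all three factors verified and condition (\ref{crit}) in hand, Lemma \ref{criterion} yields $I(Z)^{(m)}=I(Z)^m$ for all $m\in\mathbb{N}$. In particular $I(Z)^{(m)}/I(Z)^m=0$, so $\textrm{sdefect}(I(Z),m)=0$ for all $m$, and hence $\rho(I(Z))=1$ as recalled after Theorem \ref{theo}. The only real obstacle is the handling of $Z_1$, and Lemma \ref{lemma31} together with the parity reduction ``$t$ even'' disposes of it; had $t$ been odd, Lemma \ref{lemma31} would leave an extra factor $I(P_0+P_1+P_2)$, which is exactly why the odd case behaves differently and is treated separately in Theorem \ref{casec}.
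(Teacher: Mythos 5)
Your proof is correct and follows essentially the same route as the paper: apply Lemma \ref{theo5} to $kZ$ to obtain condition (\ref{crit}), handle the collinear factors by Theorem \ref{theo}, use the evenness of $m_0+m_1-m_2$ together with Lemma \ref{lemma31} to show the triple-point factor has equal symbolic and ordinary powers, and conclude via Lemma \ref{criterion}. Your write-up is in fact slightly more explicit than the paper's on the key step for $Z_1$.
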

\begin{proof} Since $m_0+m_1+m_2$ is even, we can set $m_0+m_1-m_2=2q$.
By applying Lemma \ref{theo5} to the fat point scheme $kZ=km_0P_0+km_1P_1+\nolinebreak km_2P_2$, we obtain 
\begin{flalign*}
I(kZ)=  I(k2q(P_0+P_1+P_2))& \cdot I(k(m_2-m_1)(P_0+P_2))\\
&\cdot I(k(m_2-m_0)(P_1+P_2)).
\end{flalign*}
From Lemma \ref{lemma31} we can also deduce
$$ I(2q(P_0+P_1+P_2))^{(m)}=I(2q(P_0+P_1+P_2))^{m} \qquad \forall m\in \mathbb{N}.$$
Thus the conditions of Lemma \ref{criterion} are satisfied for $Z$, and we can deduce the desired result.
\end{proof}
Let $\sum_{i=0}^2{m_i}$ be odd.
Our aim is proving  Theorem \ref{casec}.
Proposition \ref{prop} gives us a suitable lower bound, so we need to prove that $\rho(I(Z))\leq  \frac{1+\sum{m_i}}{\sum{m_i}}$. We  will do it by directly considering the definition of resurgence and using further preliminary lemmas on the splitting of the symbolic powers.
By Lemma  \ref{theo5} and Lemma \ref{lemma31}, we can deduce the following corollary.
\begin{corollary} \label{cor8}
Let $P_0, P_1$ and $P_2$ be noncollinear points in $\mathbb{P}^N$ and $m_0 \leq m_1 \leq m_2 $. Denote by $Z$ the corresponding fat point scheme $Z=m_0P_0+m_1 P_1+m_2P_2 $. 
If $m_0+m_1>m_2$ and $m_0+m_1+m_2$ is odd, then for all $ k \in \mathbb{N}$,\\
$ I(Z)^{(k)}=I(P_0+P_1+P_2)^{(k)}\cdot I((m_0-1)P_0+(m_1-1)P_1+(m_2-1)P_2)^{(k)}= I(P_0+P_1+P_2)^{(k)}\cdot I((m_0-1)P_0+(m_1-1)P_1+(m_2-1)P_2)^{k}$.	
\end{corollary}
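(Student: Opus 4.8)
The plan is to reduce everything to the splitting lemmas already in hand. Since we are dealing with fat points, $I(Z)^{(k)}=I(kZ)$ for every $k$, and likewise for the other schemes, so both asserted equalities are statements about ordinary ideals of fat point schemes. Write $W=P_0+P_1+P_2$ for the reduced triangle and $Z'=(m_0-1)P_0+(m_1-1)P_1+(m_2-1)P_2$, so that $Z=W+Z'$ as schemes. The hypotheses $m_0+m_1>m_2$ and $m_0+m_1+m_2$ odd force $m_0+m_1-m_2=2q+1$ to be a positive odd integer for some $q\ge 0$; this arithmetic fact drives the whole argument.

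First I would apply Lemma \ref{theo5} to $kZ$ (which satisfies $km_0+km_1>km_2$), obtaining
\[
I(kZ)=I\big(k(2q+1)W\big)\cdot I\big(k(m_2-m_1)(P_0+P_2)\big)\cdot I\big(k(m_2-m_0)(P_1+P_2)\big).
\]
The next step is to peel a single reduced copy of $W$ off the uniform factor, namely to establish the identity $I\big(k(2q+1)W\big)=I(kW)\cdot I(2qkW)$ using Lemma \ref{lemma31}. The point is that one of the two multipliers, $2qk$, is even, so there is no ``carry'' when each multiplier is written as $2(\cdot)+r$ with $r\in\{0,1\}$: Lemma \ref{lemma31} expresses each $I(tW)$ as $I(2W)^{\lfloor t/2\rfloor}\cdot I(W)^{t\bmod 2}$, and since $2qk$ contributes no parity bit the exponents of $I(2W)$ and of $I(W)$ add up correctly on both sides. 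I expect a short case split on the parity of $k$ here, but both cases fall out of the same exponent bookkeeping (and the identity is trivial when $q=0$, since then $I(2qkW)=R$).

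It then remains to recognize the residual product $I(2qkW)\cdot I(k(m_2-m_1)(P_0+P_2))\cdot I(k(m_2-m_0)(P_1+P_2))$ as $I(kZ')$. When $q\ge 1$ the scheme $Z'$ has $(m_0-1)+(m_1-1)>m_2-1$, so applying Lemma \ref{theo5} to $kZ'$ reproduces exactly these three factors, with uniform part $2qkW$. When $q=0$ one has $(m_0-1)+(m_1-1)=m_2-1$, so $I(2qkW)=R$ and the residual product is matched instead by the case (a) splitting of Lemma \ref{theo3} applied to $kZ'$ (whose $P_2$-multiplicity $k(m_2-1)-k(m_0-1)-k(m_1-1)$ vanishes); here $m_0-1=m_2-m_1$ and $m_1-1=m_2-m_0$, so the two remaining factors coincide. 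Either way I obtain the first equality $I(kZ)=I(kW)\cdot I(kZ')=I(W)^{(k)}\cdot I(Z')^{(k)}$.

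For the second equality I would observe that $Z'$ falls precisely under Theorem \ref{thmpn}: its multiplicities satisfy $m_0-1\le m_1-1\le m_2-1$ and $(m_0-1)+(m_1-1)+(m_2-1)=m_0+m_1+m_2-3$ is even, while $(m_0-1)+(m_1-1)\ge m_2-1$ with equality exactly when $q=0$. Thus $Z'$ lies in case (b) when $q\ge 1$ and in case (a) when $q=0$, and in both situations Proposition \ref{cor7} (resp.\ Proposition \ref{cor1}) gives $I(Z')^{(m)}=I(Z')^m$ for all $m$; specializing to $m=k$ yields $I(Z')^{(k)}=I(Z')^{k}$, the final equality. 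I expect the main obstacle to be the bookkeeping in the peeling step together with the degenerate boundary $q=0$, where one must switch from the ``$m_0+m_1>m_2$'' splitting of Lemma \ref{theo5} to the ``$m_0+m_1\le m_2$'' splitting of Lemma \ref{theo3}; once the correct decomposition is written down, the algebra itself is routine.
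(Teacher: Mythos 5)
Your proposal is correct and follows essentially the same route as the paper: apply Lemma \ref{theo5} to $kZ$ and to $kZ'$, match the uniform factors via the exponent bookkeeping of Lemma \ref{lemma31}, and conclude with Propositions \ref{cor1} and \ref{cor7}. Your explicit switch to Lemma \ref{theo3} in the boundary case $q=0$ (where $(m_0-1)+(m_1-1)=m_2-1$, so Lemma \ref{theo5} does not literally apply to $Z'$) is a point the paper glosses over with a ``$\geq$'', but the substance is identical.
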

\begin{proof}
Consider $k \in \mathbb{N}$. We write $k=2q_1+r$ where $0\leq r<2$. Since $m_0+m_1+m_2$ is odd, it follows
$ m_0+m_1-m_2$ is odd and we can write $ m_0+m_1-m_2=2q_2+1$.
Set $Z_1=P_0+P_1+P_2$.
Because $km_0+km_1=k(m_0+m_1) > km_2$, we can apply Lemma \ref{theo5} to the scheme $kZ$ and we obtain that $I(Z)^{(k)}$ is equal to\\
$I(k(m_0+m_1-m_2)Z_1) \cdot I(k(m_2-m_1)(P_0+P_2))\cdot I(k(m_2-m_0)(P_1+P_2))=I((2(2q_1q_2+q_1+rq_2)+r)Z_1)\cdot I(k(m_2-m_1)(P_0+P_2))\cdot I(k(m_2-m_0)(P_1+P_2))
=I(2Z_1)^{2q_1q_2+q_1+rq_2}\cdot I(Z_1)^r \cdot I(k(m_2-m_1)(P_0+P_2))\cdot I(k(m_2-m_0)(P_1+P_2)),
$
where the last equality holds by Lemma \ref{lemma31}.
By Lemma \ref{lemma31}
$$I(Z_1)^{(k)}=I((2q_1+r)Z_1)= I(2Z_1)^{q_1}\cdot I(Z_1)^r.$$
 By applying Lemma \ref{theo5}  to the scheme 
$Z'=k(m_0-1)P_0+k(m_1-1)P_1+k(m_2-1)P_2$ (we can use it because $k(m_0-1)+k(m_1-1)=k(m_0+m_1-2) \geq k(m_2-1)$ since $m_0+m_1 \geq m_2+1$) we have
$I(Z')=I((m_0-1)P_0+(m_1-1)P_1+(m_2-1)P_2)^{(k)}
= I(k(m_0+m_1-m_2-1)Z_1)\cdot I(k(m_2-m_1)(P_0+P_2))\cdot I(k(m_2-m_0)(P_1+P_2))
=I((2(2q_1q_2+rq_2)Z_1)\cdot I(k(m_2-m_1)(P_0+P_2))\cdot I(k(m_2-m_0)(P_1+P_2))
=I(2Z_1)^{2q_1q_2+rq_2}\cdot I(k(m_2-m_1)(P_0+P_2))\cdot I(k(m_2-m_0)(P_1+P_2)),
$
where the last equality holds by Lemma \ref{lemma31}.
Thus, \begin{align*} 
&I(Z_1)^{(k)}\cdot I((m_0-1)P_0+(m_1-1)P_1+(m_2-1)P_2)^{(k)}\\
&=I(2Z_1)^{q_1}\cdot I(Z_1)^r\cdot I(Z') \\
&=I(2Z_1)^{2q_1q_2+rq_2+q_1}\cdot I(Z_1)^r\cdot I(k(m_2-m_1)(P_0+P_2))\\
& \cdot I(k(m_2-m_0)(P_1+P_2))=I(Z)^{(k)}.
\end{align*}
Finally notice that by Propositions \ref{cor1} and \ref{cor7} it follows that 
\begin{flalign*}
&I((m_0-1)P_0+(m_1-1)P_1+(m_2-1)P_2)^{(k)}\\
&=I((m_0-1)P_0+(m_1-1)P_1+(m_2-1)P_2)^{k}.\qedhere
\end{flalign*}
\end{proof}
Notice that in general the equality  $I(Z)^{(a+b)}=I(Z)^{(a)} \cdot I(Z)^{(b)}$ is not satisfied. 
However, the previous results  imply the following corollary which tells us when this splitting is possible for  $I(Z)$. 
\begin{corollary} \label{cor9}
Let $P_0, P_1$ and $P_2$ be noncollinear points in $\mathbb{P}^N$ and $m_2 \geq \max(m_0,m_1)$. Denote by $Z$ the fat point scheme $Z=m_0P_0+m_1 P_1+m_2P_2. $ If $m_0+m_1>m_2$ and $m_0+m_1+m_2$ is odd, then
\begin{flalign*}
 &I(Z)^{(k)}=I(Z)^{(2i)}\cdot I(Z)^{(k-2i)}\  \mbox{for}\ 1\leq i \leq \frac{k}{2}-1   \ \mbox{if} \  k \  \mbox{is even} \\
 &I(Z)^{(k)}=I(Z)^{(i)}\cdot I(Z)^{(k-i)} \  \mbox{for}\  1\leq i \leq k-1   \ \mbox{if} \  k \  \mbox{is odd},
\end{flalign*}
i.e., $I(Z)^{(k)}=I(Z)^{(i)}\cdot I(Z)^{(k-i)}$ as long as $i$ and $k-i$ are not both odd.
\end{corollary}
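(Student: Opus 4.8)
The plan is to transfer the whole question to the reduced scheme $T=P_0+P_1+P_2$ by means of Corollary \ref{cor8}, and then to read the answer off from the explicit factorization of the symbolic powers of $T$ supplied by Lemma \ref{lemma31}. Write $W=(m_0-1)P_0+(m_1-1)P_1+(m_2-1)P_2$. Since Corollary \ref{cor8} holds for every symbolic index and records $I(W)^{(j)}=I(W)^{j}$ (the hypotheses place $W$ under Proposition \ref{cor1} or Proposition \ref{cor7}, its multiplicities summing to the even number $m_0+m_1+m_2-3$), applying it to each of the indices $i$, $k-i$ and $k$ in turn yields
\[
I(Z)^{(j)}=I(T)^{(j)}\cdot I(W)^{j}\qquad\text{for }j\in\{i,\,k-i,\,k\}.
\]

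First I would multiply the factorizations for $j=i$ and $j=k-i$. Using that ideal multiplication is commutative and that $I(W)^{i}\cdot I(W)^{k-i}=I(W)^{k}$, this gives
\[
I(Z)^{(i)}\cdot I(Z)^{(k-i)}=I(T)^{(i)}\cdot I(T)^{(k-i)}\cdot I(W)^{k}.
\]
Since also $I(Z)^{(k)}=I(T)^{(k)}\cdot I(W)^{k}$, it suffices to establish the corresponding identity for $T$ alone, namely
\[
I(T)^{(k)}=I(T)^{(i)}\cdot I(T)^{(k-i)};
\]
multiplying this through by $I(W)^{k}$ then delivers the statement of the corollary. In this way the problem is reduced to a clean multiplicativity statement for the symbolic powers of three reduced noncollinear points.

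To finish I would invoke Lemma \ref{lemma31} in the form $I(T)^{(j)}=I(2T)^{\lfloor j/2\rfloor}\cdot I(T)^{\,j\bmod 2}$. If $i$ and $k-i$ are both even, say $i=2a$ and $k-i=2b$, the product $I(T)^{(i)}\cdot I(T)^{(k-i)}$ equals $I(2T)^{a+b}=I(2T)^{k/2}=I(T)^{(k)}$; if exactly one of $i$, $k-i$ is odd, then $k$ is odd, the unique surviving factor of $I(T)$ sits in the correct place, and the exponents of $I(2T)$ again sum to $\lfloor k/2\rfloor$, giving $I(T)^{(k)}$. These two computations cover precisely the cases asserted in the statement, so the verification is a routine bookkeeping of exponents. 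I expect the real content to be \emph{why} the case of $i$, $k-i$ both odd is excluded: there the product picks up a factor $I(T)^{2}$ where $I(T)^{(k)}$ instead carries an extra factor $I(2T)=I(T)^{(2)}$, and for three noncollinear points $I(T)^{2}\subsetneq I(T)^{(2)}$ (in coordinates, $x_0x_1x_2\in I(T)^{(2)}\setminus I(T)^{2}$), so the identity genuinely fails there. As the corollary claims only the two non-excluded cases, this completes the plan.
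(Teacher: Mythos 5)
Your proof is correct, and it reaches the conclusion by a route that is organized differently from the paper's, even though it draws on the same two ingredients (Corollary \ref{cor8} and Lemma \ref{lemma31}). The paper splits on the parity of $k$: for $k=2q$ it avoids Corollary \ref{cor8} entirely, writing $I(Z)^{(2q)}=I(2Z)^{(q)}=I(2Z)^{q}$ because $2Z$ has even multiplicity sum and so falls under Proposition \ref{cor7}, after which the splitting is just that of an ordinary power; for $k=2q+1$ it uses Corollary \ref{cor8} and Lemma \ref{lemma31} to peel off one copy of $I(Z)$, obtaining $I(Z)^{(2q+1)}=I(Z)\cdot I(Z)^{(2q)}$, and then bootstraps from the even case. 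You instead apply Corollary \ref{cor8} uniformly at the indices $i$, $k-i$ and $k$, cancel the common factor $I(W)^{k}$, and reduce everything to the single multiplicativity statement $I(T)^{(k)}=I(T)^{(i)}\cdot I(T)^{(k-i)}$ for the reduced scheme $T=P_0+P_1+P_2$, which you verify by bookkeeping the exponents in $I(T)^{(j)}=I(2T)^{\lfloor j/2\rfloor}\cdot I(T)^{\,j\bmod 2}$; the arithmetic $\lfloor i/2\rfloor+\lfloor(k-i)/2\rfloor=\lfloor k/2\rfloor$ exactly when $i$ and $k-i$ are not both odd is checked correctly. What your version buys is uniformity (no separate induction or case split on the parity of $k$) and a transparent explanation of why the excluded case genuinely fails, via $x_0x_1x_2\in I(T)^{(2)}\setminus I(T)^{2}$; what the paper's version buys is that the even case needs nothing beyond Proposition \ref{cor7}. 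The only point worth stating explicitly in a final write-up is the one you parenthesize: that $W=(m_0-1)P_0+(m_1-1)P_1+(m_2-1)P_2$ satisfies the hypotheses of Proposition \ref{cor1} or \ref{cor7} (its multiplicity sum is even and $(m_0-1)+(m_1-1)\geq m_2-1$), which is what licenses $I(W)^{(j)}=I(W)^{j}$; this is already recorded inside Corollary \ref{cor8}, so no gap remains.
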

\begin{proof}
\textbf{(a)} Suppose that $k=2q$. Then $ I(Z)^{(2q)}=I(2Z)^{(q)}$, where $2Z$ is a fat point scheme that satisfies the condition of the Proposition \ref{cor7}. Therefore \\
$ I(2Z)^{(q)}=I(2Z)^q=I(2Z)^i\cdot I(2Z)^{q-i}=I(2Z)^{(i)}\cdot I(2Z)^{(q-i)}=I(Z)^{(2i)}\cdot I(Z)^{(k-2i)}. $\\
\textbf{(b)} 
Suppose that $k=2q+1$.
By Proposition \ref{cor7}, Lemma  \ref{lemma31}, Corollary \ref{cor8} and the even case, it follows that
\begin{flalign*}
&I(Z)^{(2q+1)}=I(P_0+P_1+P_2)^{(2q+1)}\\
&\cdot I((m_0-1)P_0+(m_1-1)P_1+(m_2-1)P_2)^{(2q+1)}=I(P_0+P_1+P_2)^{(2q)}\\
&\cdot I(P_0+P_1+P_2)\cdot I((m_0-1)P_0+(m_1-1)P_1+(m_2-1)P_2)^{2q+1}\\
&= I(Z) \cdot I(Z)^{(2q)} =I(Z)\cdot I(Z)^{(2i)}\cdot I(Z)^{(2q-2i)}\\
&=I(Z)^{(2i+1)}\cdot I(Z)^{(2q-2i)}
\end{flalign*} 
and the desired result follows.
\end{proof}
As a consequence of the results which were proved in \cite[Theorem 3.4]{bocci2010resurgence}, we can deduce the following corollary for three simple points in $\mathbb{P}^2$.
\begin{corollary} \label{cor4}
Let $P_0=[1:0:0]$, $P_1=[0:1:0]$, $P_2=[0:0:1]$. Then $$ \rho(I_{\mathbb{P}^2}(P_0+P_1+P_2))=4/3.$$	
\end{corollary}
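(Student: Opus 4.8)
The plan is to treat $Z=P_0+P_1+P_2$ as the special case $m_0=m_1=m_2=1$ of the three noncollinear point schemes studied above, and to collapse the two-sided estimate
$$\frac{\alpha(I_{\mathbb{P}^2}(Z))}{\widehat{\alpha}(I_{\mathbb{P}^2}(Z))}\leq \rho(I_{\mathbb{P}^2}(Z))\leq \frac{\textrm{reg}(I_{\mathbb{P}^2}(Z))}{\widehat{\alpha}(I_{\mathbb{P}^2}(Z))},$$
valid for ideals of zero-dimensional subschemes by \cite[Theorem 1.2]{bocci2010resurgence}, to a single value. The key point is that for three noncollinear reduced points the extreme invariants $\alpha$ and $\textrm{reg}$ coincide, so the lower and upper bounds meet and pin down $\rho$ exactly. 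This is the mechanism behind \cite[Theorem 3.4]{bocci2010resurgence} in the present configuration.

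First I would record the relevant numerical invariants, all of which are already available from the earlier propositions. Here $m_2=1\leq 2=m_0+m_1$ and $m_0+m_1+m_2=3$ is odd, so Proposition \ref{prop33}\textbf{(c)} gives $\alpha(I_{\mathbb{P}^2}(Z))=(1+1+1+1)/2=2$; concretely, no line passes through all three noncollinear points while each of the monomials $x_0x_1$, $x_0x_2$, $x_1x_2$ vanishes at the three coordinate points, confirming $\alpha=2$. Next, exactly as in the computation (\ref{const}) inside the proof of Proposition \ref{prop}, I would pass to even symbolic powers: applying Proposition \ref{prop33}\textbf{(b)} to $2mZ$ (whose three equal multiplicities $2m$ give an even total) yields $\alpha(I_{\mathbb{P}^2}(Z)^{(2m)})=3m$, whence $\widehat{\alpha}(I_{\mathbb{P}^2}(Z))=3/2$.

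It then remains to verify $\textrm{reg}(I_{\mathbb{P}^2}(Z))=2$. In the coordinates $P_0=[1:0:0]$, $P_1=[0:1:0]$, $P_2=[0:0:1]$ the ideal is the monomial ideal $(x_0x_1,x_0x_2,x_1x_2)$, generated in degree $2$, with minimal free resolution $0\to R(-3)^2\to R(-2)^3\to I\to 0$, so that $I$ is $2$-regular; equivalently, three noncollinear points impose independent conditions in each degree $\geq 1$, giving Hilbert function $1,3,3,3,\dots$ and hence a $2$-regular ideal. Therefore $\alpha(I_{\mathbb{P}^2}(Z))=\textrm{reg}(I_{\mathbb{P}^2}(Z))=2$, and both sides of the displayed bound equal $2/(3/2)=4/3$, forcing $\rho(I_{\mathbb{P}^2}(P_0+P_1+P_2))=4/3$. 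The only nonroutine ingredient is the equality $\alpha=\textrm{reg}$, which is what makes the two bounds coincide; once this is in hand the conclusion is a direct substitution of the three invariants.
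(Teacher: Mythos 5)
Your proof is correct, but it is genuinely different in character from what the paper does: the paper offers no argument at all for this corollary, deducing it purely by citation of \cite[Theorem 3.4]{bocci2010resurgence}, whereas you derive the value from first principles using the two-sided bound $\alpha(I)/\widehat{\alpha}(I)\leq\rho(I)\leq\operatorname{reg}(I)/\widehat{\alpha}(I)$ of \cite[Theorem 1.2]{bocci2010resurgence} (already quoted in the paper's introduction and used again in Proposition \ref{prop}). All three of your invariant computations check out: $\alpha=2$ follows from Proposition \ref{prop33}(c) with $m_0=m_1=m_2=1$ (or directly from $I=(x_0x_1,x_0x_2,x_1x_2)$ and the absence of a line through three noncollinear points); $\widehat{\alpha}=3/2$ follows exactly as in the computation \eqref{const} by applying Proposition \ref{prop33}(b) to $2mZ$; and $\operatorname{reg}(I)=2$ follows from the Hilbert--Burch resolution $0\to R(-3)^2\to R(-2)^3\to I\to 0$, or equivalently from the Hilbert function $1,3,3,\dots$ of $R/I$. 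Since $\alpha=\operatorname{reg}$, the bounds collapse to $2/(3/2)=4/3$. What your route buys is self-containedness: the only external input is the general bound of \cite[Theorem 1.2]{bocci2010resurgence}, rather than the specific computation in \cite[Theorem 3.4]{bocci2010resurgence} (which is itself proved by essentially this mechanism). What the paper's route buys is brevity. Either is acceptable; yours would make the logical dependence of Lemma \ref{lemma3} and hence Theorem \ref{casec} on the literature slightly lighter.
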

From the previous corollary we can deduce the following useful lemma.
\begin{lemma} \label{lemma3}
Let $P_0, P_1$ and $P_2$ be noncollinear points in $\mathbb{P}^N$. Then $$ I(P_0+P_1+P_2)^{(r)} \subseteq I(P_0+P_1+P_2)^{r-1} \textrm{ for }  1\leq r \leq 4.$$
\end{lemma}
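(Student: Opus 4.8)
The plan is to work entirely with monomials, since $I:=I(P_0+P_1+P_2)$ is a monomial ideal in the coordinates for which $P_0=[1:0:\cdots:0]$, $P_1=[0:1:0:\cdots:0]$ and $P_2=[0:0:1:0:\cdots:0]$. The cases $r=1$ and $r=2$ are immediate: $I^{(1)}=I\subseteq R=I^{0}$, and $I^{(2)}\subseteq I^{(1)}=I=I^{1}$. So the content lies in $r=3$ and $r=4$.

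For the remaining cases I would reduce the $\mathbb{P}^N$ containment to a containment in $\mathbb{P}^2$. First observe that the minimal monomial generators of $I$ are $x_3,\ldots,x_N$ together with $x_0x_1,x_0x_2,x_1x_2$; writing $J:=I_{\mathbb{P}^2}(P_0+P_1+P_2)=(x_0x_1,x_0x_2,x_1x_2)$ and $L=(x_3,\ldots,x_N)$ we have $I=JR+L$. For a monomial $\mathcal{N}=x_0^{a_0}\cdots x_N^{a_N}$ set $\mathcal{N}'=x_0^{a_0}x_1^{a_1}x_2^{a_2}$ and $s=a_3+\cdots+a_N$. Using Lemma \ref{general} and expanding $I^{k}=\sum_{i=0}^{k}J^{i}L^{k-i}$ in $R$, I would prove the two membership criteria
\begin{align*}
\mathcal{N}\in I^{(r)} &\iff \mathcal{N}'\in J^{(\max(0,\,r-s))},\\
\mathcal{N}\in I^{k} &\iff \mathcal{N}'\in J^{\max(0,\,k-s)}.
\end{align*}
The first holds because the inequalities of $\operatorname{Cond}(r(P_0+P_1+P_2))$ read $a_1+a_2+s\ge r$, $a_0+a_2+s\ge r$, $a_0+a_1+s\ge r$, which say exactly that $\mathcal{N}'$ lies in the $(r-s)$-th symbolic power of $J$ (vacuous when $s\ge r$). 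The second holds since a single monomial lies in $\sum_{i}J^{i}L^{k-i}$ iff $\mathcal{N}'\in J^{i}$ for some $i\ge k-s$, i.e. iff $\mathcal{N}'\in J^{\max(0,\,k-s)}$.

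Granting these, the inclusion $I^{(r)}\subseteq I^{r-1}$ becomes the assertion that $\mathcal{N}'\in J^{(\max(0,\,r-s))}$ implies $\mathcal{N}'\in J^{\max(0,\,r-1-s)}$. Splitting on $s$, the cases $s\ge r-1$ are trivial (the target power is $J^{0}=R$), while the case $s\le r-2$ requires precisely $J^{(p)}\subseteq J^{p-1}$ in $\mathbb{P}^2$ for $p=r-s\in\{2,\ldots,r\}$. Since $r\le 4$, only $p\in\{2,3,4\}$ occur. The case $p=2$ is trivial ($J^{(2)}\subseteq J^{(1)}=J$); the case $p=3$ follows from Corollary \ref{cor4}, because $3/2>4/3=\rho(J)$ forces $J^{(3)}\subseteq J^{2}$ by the definition of the resurgence as a supremum.

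The main obstacle is the boundary case $p=4$, namely $J^{(4)}\subseteq J^{3}$: here $m/r=4/3$ equals $\rho(J)$ exactly, so the resurgence bound is not strict and gives no information. I would settle this by a direct monomial check in $\mathbb{P}^2$: a monomial $x_0^{a}x_1^{b}x_2^{c}\in J^{(4)}$ satisfies $a+b,\,a+c,\,b+c\ge 4$, and $J^{3}$-membership is equivalent to $\max(0,3-a)+\max(0,3-b)+\max(0,3-c)\le 3$, which one verifies for all such $(a,b,c)$ (ordering $a\le b\le c$, the only tight configurations are $a=b=c=2$ and the permutations of $(0,4,4)$). Alternatively one can invoke the complete containment statement for three points from \cite[Theorem 3.4]{bocci2010resurgence}, or use $J^{(4)}=(J^{(2)})^{2}$ from Lemma \ref{lemma31} and verify $(J^{(2)})^{2}\subseteq J^{3}$ on the finitely many products of the generators of $J^{(2)}$. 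This disposes of every case.
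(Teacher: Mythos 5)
Your proposal is correct, and its overall strategy coincides with the paper's: reduce the $\mathbb{P}^N$ statement to the three coordinate points of $\mathbb{P}^2$, invoke Corollary \ref{cor4} ($\rho=4/3$) to handle the non-extremal exponents, and treat the boundary case $J^{(4)}\subseteq J^3$ separately. The differences are in execution. The paper runs an induction on $r$ and, in the inductive step, strips off a single variable $x_j$ ($j\ge 3$) to drop from $I^{(r)}$ to $I^{(r-1)}$, landing in the $\mathbb{P}^2$ case only when $b=0$; you instead prove once and for all the clean membership criteria $\mathcal{N}\in I^{(r)}\iff \mathcal{N}'\in J^{(\max(0,r-s))}$ and $\mathcal{N}\in I^{k}\iff \mathcal{N}'\in J^{\max(0,k-s)}$ (both of which check out, the second via $I=JR+L$ and the fact that a monomial lies in a sum of monomial ideals iff it lies in one summand), which collapses the whole lemma to the planar containments $J^{(p)}\subseteq J^{p-1}$ for $p\le 4$ without induction. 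This buys a sharper statement (the equivalences isolate exactly what must be verified in $\mathbb{P}^2$) at the cost of a little more bookkeeping. More significantly, for the critical case $p=4$ the paper only asserts that $J^{(4)}\subseteq J^{3}$ ``is possible to check computationally,'' whereas you supply an explicit verification: the criterion $\max(0,3-a)+\max(0,3-b)+\max(0,3-c)\le 3$ for membership in $J^3$ is the correct one, and your case analysis on $a\le b\le c$ under $a+b,a+c,b+c\ge 4$ is complete. That makes your argument self-contained where the paper defers to a computation, which is a genuine (if small) improvement.
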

\begin{proof}
We work by induction on $r$. It is trivial for $r=1$ . For the induction suppose that it is true for $r-1$ and we prove it for $r$. Consider
$\mathcal{N}=x_0^{a_0}x_1^{a_1}\cdots x_N^{a_N} \in I(P_0+P_1+P_2)^{(r)} $
then
\begin{eqnarray} \label{condizio1}  
\begin{cases}
a_1+a_2+a_3+\cdots+a_N \geq r \\
a_0+a_2+a_3+\cdots+a_N \geq r \\ 
a_0+a_1+a_3+\cdots+a_N \geq r.
\end{cases} \end{eqnarray}
Set $b=\sum_{i=3}^N{a_i}$. We have the following cases:\\
\textbf{(a)} Assume $b=0$. We can see the monomial $\mathcal{N} $ as an element of the ideal
$I_{\mathbb{P}^2}(P_0+P_1+P_2)^{(r)}. $
By Corollary \ref{cor4},  $ \rho(I_{\mathbb{P}^2}(P_0+P_1+P_2))=4/3$. Furthermore, $r < 4$ implies $ 4r-4 < 3r $,  so $r/(r-1) > 4/3=\rho(I_{\mathbb{P}^2}(P_0+P_1+P_2))$. Then, using the definition of resurgence $ I_{\mathbb{P}^2}(P_0+P_1+P_2)^{(r)} \subseteq I_{\mathbb{P}^2}(P_0+P_1+P_2)^{r-1}$  for $r<4$, while it is possible to check computationally that $ I_{\mathbb{P}^2}(P_0+P_1+P_2)^{(4)} \subseteq I_{\mathbb{P}^2}(P_0+P_1+P_2)^{3}$. Thus $ \mathcal{N} \in I_{\mathbb{P}^2}(P_0+P_1+P_2)^{r-1}  $. Hence, $ \mathcal{N} \in I(P_0+P_1+P_2)^{r-1}$.\\
\textbf{(b)} Assume $ \sum_{i=3}^N{a_i}=b>0$.  There exists $i \in \left\{ 3,\ldots,N\right\}$ such that $a_i>0$.
We  may assume $i= \nolinebreak 3$.
We  write $ \mathcal{N}= \nolinebreak(x_3)\cdot (x_0^{a_0}x_1^{a_1}x_2^{a_2}x_3^{a_3-1}\cdots x_{N}^{a_N})$, where $x_3 \in I(P_0+P_1+P_2)$.
By (\ref{condizio1}) it follows that $ x_0^{a_0}x_1^{a_1}x_2^{a_2}x_3^{a_3-1}\cdots x_{N}^{a_N} \in I(P_0+P_1+P_2)^{(r-1)} \subseteq I(P_0+P_1+P_2)^{r-2}$, where the last inclusion holds for the induction.
 Hence, $\mathcal{N} \in I(P_0+P_1+P_2)^{r-1}$.
\end{proof}
Now we can prove the following important lemma.
\begin{lemma} \label{theo4}
Let $P_0, P_1$ and $P_2$ be noncollinear points in $\mathbb{P}^N$ and $m_2 \geq \max(m_0,m_1)$ and suppose that $m_0+m_1 > m_2 $ and $m_0+m_1+m_2$  is odd. Let $Z=m_0P_0+m_1P_1+m_2P_2$ be a scheme of fat points.  Then
\begin{itemize}	
\item[\textbf{(a)}] $I(Z)^{(q(1+\sum{m_i}))} \subseteq I(Z)^{q(\sum{m_i})}$ for all $q \in \mathbb{N}$, 
\item[\textbf{(b)}]
$	I(Z)^{(q(1+\sum{m_i})+r)}\subseteq I(Z)^{q(\sum{m_i})+r-1}$  for all $q \in \mathbb{N}$ and $0<r<1+\sum{m_i}$.
	\end{itemize}
\end{lemma}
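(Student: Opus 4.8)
The plan is to prove both statements by induction on $q$, reducing them to a family of ``critical--line'' containments for small symbolic powers and then propagating these upward using the multiplicativity of symbolic powers from Corollary \ref{cor9}. Throughout write $s=\sum_{i=0}^{2}m_i$ (which is odd), $T=P_0+P_1+P_2$ and $W=(m_0-1)P_0+(m_1-1)P_1+(m_2-1)P_2$. The starting point is Corollary \ref{cor8}, which gives $I(Z)^{(k)}=I(T)^{(k)}\cdot I(W)^{k}$ for every $k$, together with $I(W)^{(k)}=I(W)^{k}$; taking $k=1$ yields $I(Z)=I(T)\cdot I(W)$, whence $I(Z)^{n}=I(T)^{n}\cdot I(W)^{n}$.

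First I would isolate the base cases, namely the containments $I(Z)^{(r)}\subseteq I(Z)^{r-1}$ for $1\le r\le s+1$. Granting these, both (a) and (b) follow by a short induction on $q$. Since $s+1$ is even, Corollary \ref{cor9} lets me split off a factor $I(Z)^{(s+1)}$ without ever splitting an odd exponent into two odd ones: for (a) one gets $I(Z)^{(q(s+1))}=I(Z)^{(s+1)}\cdot I(Z)^{((q-1)(s+1))}\subseteq I(Z)^{s}\cdot I(Z)^{(q-1)s}=I(Z)^{qs}$, using the base case $r=s+1$ and the inductive hypothesis. The computation for (b) is identical, peeling $I(Z)^{(s+1)}$ off $I(Z)^{(q(s+1)+r)}$ and landing in $I(Z)^{qs+r-1}$; its case $q=0$ is itself a base case, while $q=0$ of (a) is the trivial $R\subseteq R$.

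The heart of the argument is therefore the base cases, and here I would use the decomposition from Corollary \ref{cor8} to cancel the common symbolic power of $W$. Since $I(Z)^{(r)}=I(T)^{(r)}I(W)^{r}$ and $I(Z)^{r-1}=I(T)^{r-1}I(W)^{r-1}$, writing $I(W)^{r}=I(W)^{r-1}I(W)$ reduces each base case to the single containment $(\star)$, namely
\[ I(T)^{(r)}\cdot I(W)\subseteq I(T)^{r-1},\qquad 1\le r\le s+1. \]
For $r\le 4$ this is immediate from Lemma \ref{lemma3}, which already gives $I(T)^{(r)}\subseteq I(T)^{r-1}$, so the extra factor $I(W)\subseteq R$ is harmless. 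The remaining range $5\le r\le s+1$ is exactly where $\rho(I(T))=4/3$ fails to force the containment by itself, so the multiplicities hidden in $I(W)$ must make up the deficit. I would prove $(\star)$ by a direct monomial analysis in the spirit of the proof of Lemma \ref{lemma3}: all the ideals involved are monomial, $I(T)$ is generated by $x_3,\dots,x_N$ together with $x_0x_1,x_0x_2,x_1x_2$, and one can peel off any variable $x_i$ with $i\ge 3$ (each of which lies in $I(T)$) to descend on $\sum_{i\ge 3}a_i$, reducing to the planar case supported on the coordinate triangle of $\mathbb{P}^2$.

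The step I expect to be the main obstacle is precisely this planar instance of $(\star)$. The difficulty is that $I(T)^{r-1}$ is an \emph{ordinary} power, which, unlike the symbolic powers described by $\operatorname{Cond}$, has no simple vanishing--order characterization, so one cannot merely compare orders of vanishing at $P_0,P_1,P_2$. Instead I would track the minimal monomial generators of $I(T)^{r-1}$ explicitly and verify, using $\alpha(I(T)^{(r)})=\lceil 3r/2\rceil$ together with the value of $\alpha(I(W))$ supplied by Proposition \ref{prop33}, that the combined vanishing forced by a factor in $I(T)^{(r)}$ and a factor in $I(W)$ is always enough to exhibit the product as a product of $r-1$ generators of $I(T)$; the bound $r\le s+1$ is what guarantees that this remains possible.
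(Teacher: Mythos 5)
Your outer scaffolding coincides with the paper's: induction on $q$ via Corollary \ref{cor9} (exploiting that $s+1$ is even), reduction to the base cases $I(Z)^{(r)}\subseteq I(Z)^{r-1}$ for $1\le r\le s+1$, and then cancellation of $I(W)^{r-1}$ via Corollary \ref{cor8} so that everything rests on the single containment $(\star)\colon I(T)^{(r)}\cdot I(W)\subseteq I(T)^{r-1}$. That reduction is exactly what the paper does. But $(\star)$ for $4<r\le s+1$ \emph{is} the lemma --- it is where the hypothesis $m_0+m_1>m_2$ (equivalently, that $W$ is a genuine fat point scheme with all multiplicities contributing) and the bound $r\le s+1$ must enter --- and you do not prove it. You correctly flag it as ``the main obstacle'' and then offer only a plan: compare $\alpha(I(T)^{(r)})=\lceil 3r/2\rceil$ with $\alpha(I(W))$ and ``verify that the combined vanishing \dots is always enough.'' Degree and vanishing-order data cannot certify membership in an \emph{ordinary} power of $I(T)=(x_0x_1,x_0x_2,x_1x_2,x_3,\dots,x_N)$; indeed $I(T)^{(r)}\not\subseteq I(T)^{r-1}$ already fails for $r=5$ despite the same $\alpha$-count, so some combinatorial argument genuinely using $I(W)$ is unavoidable, and you have not supplied it. Your proposed descent on $\sum_{i\ge 3}a_i$ also has a gap: peeling $x_3$ off the $I(W)$-factor changes $W$, so the statement being inducted on must be formulated for a whole family of multiplicities, not just for $Z$ itself.

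The paper closes this gap with one further idea you are missing: replace the product $I(T)^{(r)}\cdot I(W)$ by the larger single fat-point ideal $I(rT+W)=I\bigl((r+m_0-1)P_0+(r+m_1-1)P_1+(r+m_2-1)P_2\bigr)$, which has an explicit monomial description via $\operatorname{Cond}$, and prove $I(V(n_0,n_1,n_2,r))\subseteq I(T)^{r-1}$ for all $n_i\ge 1$ and $1<r<1+\sum n_i$ (with a parallel claim $I(W(n_0,n_1,n_2))\subseteq I(T)^{\sum n_i}$ covering the boundary value $r=1+\sum n_i$ needed for part (a)). This is done by a double induction --- on $\sum n_i$, with base case $n_0=n_1=n_2=1$ given by Lemma \ref{lemma3}, and within that on the monomial, peeling off $x_j$ ($j\ge 3$) or $x_0x_1$, $x_0x_2$, $x_1x_2$ so as to decrease $r$ and one $n_i$ simultaneously. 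Until you formulate and prove a statement of this kind (or an equivalent explicit membership criterion for $I(T)^{n}$), the argument is incomplete at its central step.
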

\begin{proof}
Let us start with proving \textbf{(a)} by induction on $q$. First, we let $q=1$ as the base case. Thus, we need to prove  $ 	I(Z)^{(1+\sum{m_i})} \subseteq I(Z)^{\sum{m_i}}$.
Set $ Z_1=P_0+P_1+P_2 $ and $  Z_2=(m_0-1)P_0+(m_1-1)P_1+(m_2-1)P_2$, and we define:
$$ W(n_0,n_1,n_2)=(n_0+\sum_{i=0}^2{n_i})P_0+(n_1+\sum_{i=0}^2{n_i})P_1+(n_2+\sum_{i=0}^2{n_i})P_2,$$
for $n_i \geq \nolinebreak 1$. We claim that $I(W(n_0,n_1,n_2)) \subseteq I(Z_1)^{\sum{n_i}}$,  for all  $n_i \geq 1$.\\
\noindent \textbf{Proof of the claim}.
We prove by induction on the  sum $\sum_{i=0}^2{n_i}$. The base case is $\sum_{i=0}^2{n_i}=3$, with  $n_0=n_1=n_2=1$ and by  Lemma \ref{lemma3} it holds.  Now, we suppose the claim  holds for  $n
_i'$ such that $\sum_{i=0}^2{n'_i}<\sum_{i=0}^2{n_i}$ and we prove it for $n_i$.
Because we have already considered the case  $n_0=n_1=n_2=1$, there must exist an $i$ such that $n_i>1$. We can assume that $n_2>1$.
We consider the monomial $\mathcal{N}=x_0^{a_0}x_1^{a_1}\cdots x_N^{a_N} \in I(W(n_0,n_1,n_2))$.
Set $b=\sum_{i=3}^N{a_i}$.\\
\textbf{(i)} Let $ b=0$. We have the following subcases.
\begin{itemize}
\item[\textbf{(1)}]
Let $a_1=0$. By $\textrm{Cond}(W(n_0,n_1,n_2))$, it follows $ a_0\geq n_0+n_1+2n_2 \geq \sum_{i=0}^2{n_i}$ and $a_2\geq 2n_0+n_1+n_2 \geq \sum_{i=0}^2{n_i}$, then it can be written $\mathcal{N}=(x_0x_2)^{\sum{n_i}} x_0^{a_0-\sum{n_i}}x_2^{a_2-\sum{n_i}} \in I(Z_1)^{\sum{n_i}}$, because $x_0x_2 \in I(Z_1)$.
\item[\textbf{(2)}]
Let $a_0=0$: similar to the subcase $a_1=0$.
\item[\textbf{(3)}]
$ a_1,a_0 >0$: we can write, $\mathcal{N}= (x_0x_1)x_0^{a_0-1}x_1^{a_1-1} x_2^{a_2}$, where $x_0x_1 \in I(Z_1)$.\\
Using the fact that the $a_i$'s satisfy  $\textrm{Cond}(W(n_0,n_1,n_2))$, we can check that $x_0^{a_0-1}x_1^{a_1-1} x_2^{a_2} \in I(W(n_0,n_1,n_2-1)).$  So, by induction ($n_2-1 \geq 1$) 
$ x_0^{a_0-1}x_1^{a_1-1} x_2^{a_2} \in I(W(n_0,n_1,n_2-1)) \subseteq  I(Z_1)^{\sum{n_i}-1},$
and $\mathcal{N} \in I(Z_1)^{\sum{n_i}}. $
\end{itemize}
\textbf{(ii)} Let $ \sum_{i=3}^N{a_i}=b >0$. Without loss of generality, let $a_3 >0$. We can write
$ \mathcal{N}= (x_3) \cdot (x_0^{a_0}x_1^{a_1} x_2^{a_2}x_3^{a_3-1}\cdots x_N^{a_N})$,
 where $x_3 \in I(Z_1)$. By using $\textrm{Cond}(W(n_0,n_1,n_2))$, the second factor is in $  I(W(n_0,n_1,n_2-1))\subseteq I(Z_1)^{\sum{n_i}-1}$, where the last inclusion holds by induction. Hence $\mathcal{N} \in I(Z_1)^{\sum{n_i}}$. So, the claim is proved. Now, from the definition  $I(Z_1)^{1+\sum{m_i}} \cdot I(Z_2) \subseteq I(W(m_0,m_1,m_2)).$ By Corollary \ref{cor8}  it follows	that
\begin{flalign*}
I(Z)^{(1+\sum{m_i})}&=I(Z_1)^{(1+\sum{m_i})} \cdot I(Z_2)^{1+\sum{m_i}}\\
&=I(Z_1)^{(1+\sum{m_i})}\cdot I(Z_2) \cdot I(Z_2)^{\sum{m_i}}\\
&\subseteq  I(W(m_0,m_1,m_2)) \cdot I(Z_2)^{\sum{m_i}}\\
&\subseteq I(Z_1)^{\sum{m_i}}\cdot I(Z_2)^{\sum{m_i}}\\
&=(I(Z_1) \cdot I(Z_2))^{\sum{m_i}}=I(Z)^{\sum{m_i}},
\end{flalign*}
and the base case is proved.

 We suppose that \textbf{(a)} is true for  $q-1$, then we prove it for $q$.
By induction and Corollary \ref{cor9}, using the fact that $1+\sum{m_i}$ is even,
\begin{flalign*}
I(Z)^{(q(1+\sum{m_i}))}&=I(Z)^{((q-1)(1+\sum{m_i}))}\cdot I(Z)^{(1+\sum{m_i})} \\
&\subseteq I(Z)^{(q-1)(\sum{m_i})}\cdot I(Z)^{\sum{m_i}}=I(Z)^{q(\sum{m_i})}.
\end{flalign*}
For proving \textbf{(b)}, we work by induction on $q$ as before.
First of all, we need to prove the base case of $q=0$. Hence, we need to show $ 	I(Z)^{(r)} \subseteq I(Z)^{r-1} \textrm{ for } 1<r<1+\sum{m_i}$. Set $  Z_1=P_0+P_1+P_2 $ and $ Z_2=(m_0-1)P_0+(m_1-1)P_1+(m_2-1)P_2$ . We define:
$$ V(n_0,n_1,n_2,r)=(r+n_0-1)P_0+(r+n_1-1)P_1+(r+n_2-1)P_2,$$
for $n_i \geq 1$ and $1<r<1+\sum{n_i}$.	We claim that $I(V(n_0,n_1,n_2,r)) \subseteq I(Z_1)^{r-1}$ always holds.\\	
\noindent \textbf{Proof of the claim}.
We work by induction on the sum $\sum{n_i}$. The base case is   $n_i=1$. Then we have to prove 
$ I(P_0+P_1+P_2)^{(r)} \subseteq I(P_0+P_1+P_2)^{r-1}, \textrm{ for } 1<r<4$
and this is true by Lemma \ref{lemma3}.
We suppose that the claim is true for assignment $n
_i'$ such that $\sum{n_i'}<\sum{n_i}$, then we prove it for $n_i$.
Because we have already considered the case  $n_0=n_1=n_2=1$, there must exist an $i$ such that $n_i>1$. We can assume that $n_2>1$.
We consider $\mathcal{N}=x_0^{a_0}x_1^{a_1}\cdots x_N^{a_N} \in I(V(n_0,n_1,n_2,r))$.
Set $b=\sum_{i=3}^N{a_i}$, and we consider cases depending upon $b$.\\
\textbf{(i)}
let $ b=0$. We have the following subcases.
\begin{itemize}
\item[\textbf{(1)}]
$a_1=0$: by $\textrm{Cond}(V(n_0,n_1,n_2,r))$, it follows	that
$$ a_0\geq r+n_2-1\geq r-1	\textrm{  and } a_2\geq r+n_0-1 \geq r-1.$$			
So we can write	
$\mathcal{N}=(x_0x_2)^{r-1} x_0^{a_0-r+1}x_2^{a_2-r+1} \in I(Z_1)^{r-1}, $ because $x_0x_2 \in I(Z_1)$.
\item[\textbf{(2)}]
$ a_0=0$: similar to the case $a_1=0$.			
\item[\textbf{(3)}]
$a_1,a_0 >0$: we write,
	$ \mathcal{N}= (x_0x_1) x_0^{a_0-1}x_1^{a_1-1} x_2^{a_2},$ where $x_0x_1 \in I(Z_1)$.
 By  $\textrm{Cond}(V(n_0,n_1,n_2,r))$, we deduce $x_0^{a_0-1}x_1^{a_1-1} x_2^{a_2} \in I(V(n_0,n_1,n_2-1,r-1))$. 
So for the inductive step ($n_2-1 \geq 1$ and $r-1<(n_0+n_1+n_2-1)+1$) we conclude
$ x_0^{a_0-1}x_1^{a_1-1} x_2^{a_2} \in I(V(n_0,n_1,n_2-1,r-1)) \subseteq I(Z_1)^{r-2},$ and $\mathcal{N} \in I(Z_1)^{r-1}. $
\end{itemize}
\textbf{(ii)} Let $\sum_{i=3}^N{a_i}=b >0$. We can assume that $a_3 >0$. We can write
$ \mathcal{N}= (x_3) (x_0^{a_0}x_1^{a_1} x_2^{a_2}x_3^{a_3-1}\cdots x_N^{a_N}) $, where $x_3 \in I(Z_1)$. By $\textrm{Cond}(V(n_0,n_1,n_2,r))$, we have that\\
$$x_0^{a_0}x_1^{a_1} x_2^{a_2}x_3^{a_3-1}\cdots x_N^{a_N} \in I(V(n_0,n_1,n_2-1,r-1)).$$
So by induction ($n_2-1 \geq 1$ and $r-1<(n_0+n_1+n_2-1)+1$) we see\\
$ x_0^{a_0}x_1^{a_1} x_2^{a_2}x_3^{a_3-1}\cdots x_N^{a_N}  \in I(V(n_0,n_1,n_2-\nolinebreak1,r-1)) \subseteq I(Z_1)^{r-2}$, and $\mathcal{N} \in I(Z_1)^{r-1} $. So the claim is true. From the definition  $I(Z_1)^{r} \cdot I(Z_2) \subseteq I(V(m_0,m_1,m_2,r)).$ By Corollary \ref{cor8}, 
\begin{flalign*}
I(Z)^{(r)}&=I(Z_1)^{(r)}\cdot I(Z_2)^{r}=	I(Z_1)^{(r)} \cdot I(Z_2) \cdot I(Z_2)^{r-1}\\ 
&\subseteq I(V(m_0,m_1,m_2,r))\cdot I(Z_2)^{r-1} \subseteq I(Z_1)^{r-1}\cdot I(Z_2)^{r-1}\\
& = (I(Z_1)\cdot I(Z_2))^{r-1}=I(Z)^{r-1},
\end{flalign*}
and the base case is proved. Now we can proceed with the inductive step. We suppose that (b) is true for $q-1$, then we prove it for $q$. By induction and  Corollary \ref{cor9} we can write, using the fact that $1+\sum{m_i}$ is even,
\begin{flalign*}
	I(Z)^{(q(1+\sum{m_i})+r)}&=I(Z)^{((q-1)(1+\sum{m_i})+r)}\cdot I(Z)^{(1+\sum{m_i})}\\
     \subseteq &I(Z)^{(q-1)(\sum{m_i})+r-1}\cdot I(Z)^{\sum{m_i}}=I(Z)^{q(\sum{m_i})+r-1}.
\end{flalign*}
Thus the proof  is  complete.
\end{proof}
By Lemma \ref{theo4} we can deduce the following crucial corollary.
\begin{corollary} \label{cor5}
	Let $P_0, P_1$ and $P_2$ be noncollinear points in $\mathbb{P}^N$ and $m_2 \geq \max(m_0,m_1)$, and suppose that $ m_0+m_1 > m_2 $ and $\sum_{i=0}^2m_i $ is odd. If $Z=m_0P_0+m_1P_1+m_2P_2$, then
	$ \rho(I(Z)) \leq (1+\sum_{i=0}^2m_i)/(\sum_{i=0}^2m_i)$.
\end{corollary}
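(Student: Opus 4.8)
The goal is to bound the resurgence $\rho(I(Z))$ from above by $(1+\sum m_i)/(\sum m_i)$ under the hypotheses that $m_0+m_1>m_2$ and $\sum_{i=0}^2 m_i$ is odd. The plan is to unwind the definition of resurgence directly: I must show that whenever $m/r > (1+\sum m_i)/(\sum m_i)$, the containment $I(Z)^{(m)}\subseteq I(Z)^r$ actually holds, so that no such pair $(m,r)$ contributes to the supremum defining $\rho$. Writing $s=\sum_{i=0}^2 m_i$ for brevity, the threshold ratio is $(s+1)/s$, and the two parts \textbf{(a)} and \textbf{(b)} of Lemma \ref{theo4} are tailored precisely to establish containments along and just below this line.

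First I would fix a pair $(m,r)$ with $m/r > (s+1)/s$ and aim to prove $I(Z)^{(m)}\subseteq I(Z)^r$. The natural move is to write $m=q(s+1)+t$ with $0\le t\le s$ via division by $s+1$. Lemma \ref{theo4}(a) handles the case $t=0$, giving $I(Z)^{(q(s+1))}\subseteq I(Z)^{qs}$, and Lemma \ref{theo4}(b) handles $0<t<s+1$, giving $I(Z)^{(q(s+1)+t)}\subseteq I(Z)^{qs+t-1}$ (note the case $t=s+1$ does not occur since it would be absorbed into the next $q$). Combining these, for any $m$ I obtain a containment $I(Z)^{(m)}\subseteq I(Z)^{e(m)}$ where $e(m)=qs$ if $t=0$ and $e(m)=qs+t-1$ if $t\ge 1$. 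Since symbolic and ordinary powers are nested ($I(Z)^r\subseteq I(Z)^{r'}$ for $r\ge r'$), it then suffices to check that $r\le e(m)$.

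The core of the argument is therefore the arithmetic verification that $m/r>(s+1)/s$ forces $r\le e(m)$. I would argue by contradiction: suppose $r\ge e(m)+1$. In the case $t\ge 1$ this means $r\ge qs+t$, and combined with $m=q(s+1)+t$ one computes $m/r \le (q(s+1)+t)/(qs+t)$; a short estimate shows this quantity is at most $(s+1)/s$ (the inequality $(q(s+1)+t)\,s \le (qs+t)(s+1)$ reduces to $0\le t$, which holds), contradicting $m/r>(s+1)/s$. The case $t=0$ is handled similarly using $e(m)=qs$ and $m=q(s+1)$, where $r\ge qs+1$ already gives $m/r<(s+1)/s$. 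So the key step is this elementary but careful comparison of ratios; I expect it to be the main obstacle only in the sense of bookkeeping, namely getting the off-by-one interaction between the exponent drop $t-1$ and the nesting of powers exactly right.

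Having shown that every pair $(m,r)$ with $m/r>(s+1)/s$ satisfies $I(Z)^{(m)}\subseteq I(Z)^r$, I conclude that such pairs are excluded from the set $\{m/r : I(Z)^{(m)}\not\subseteq I(Z)^r\}$, hence every element of that set is at most $(s+1)/s$. Taking the supremum yields $\rho(I(Z))\le (s+1)/s = (1+\sum_{i=0}^2 m_i)/(\sum_{i=0}^2 m_i)$, which is exactly the claim. The only genuine input beyond the definition of $\rho$ and the monotonicity of powers is Lemma \ref{theo4}, so the proof is short once that lemma is in hand; the real content was already invested there.
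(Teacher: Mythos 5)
Your proof is correct and takes essentially the same route as the paper: both arguments reduce the statement to the two containments of Lemma \ref{theo4} via a division-with-remainder computation, the only cosmetic difference being that you divide $m$ by $1+\sum_{i}m_i$ and rule out $r\geq e(m)+1$ by contradiction, whereas the paper divides $n$ by $\sum_{i}m_i$ and verifies the containment directly.
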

\begin{proof}
It is enough to show that if $m/n \geq (1+\sum{m_i})/(\sum{m_i}) $ then  $ I(Z)^{(m)} \subseteq \nolinebreak I(Z)^n$. Suppose that $m$ and $n$ are such that
$m/n \geq (1+\sum{m_i})/(\sum{m_i}) $. Then we can deduce $ m \geq \left\lceil \frac{1+\sum{m_i}}{\sum{m_i}}n \right\rceil$. Now, $n$ can be written as $n=q\sum{m_i}+r$ with $0\leq r<\sum{m_i}$. Thus 
$$ m \geq \left\lceil  q(1+\sum{m_i})+r+\frac{r}{\sum{m_i}}\right\rceil= \begin{cases}
q(1+\sum{m_i}) \textrm{ if } r = 0 \\
q(1+\sum{m_i})+r+1 \textrm{ if } r \neq 0.
\end{cases} $$
If $r=0$, by Lemma \ref{theo4} 
$$I(Z)^{(m)} \subseteq I(Z)^{(q(1+\sum{m_i}))} \subseteq I(Z)^{q\sum{m_i}}=I(Z)^n.$$
If $r \neq 0$, then $r'=r+1<1+\sum{m_i}$.   By Lemma \ref{theo4}
$$ I(Z)^{(m)} \subseteq I(Z)^{(q(1+\sum{m_i})+r')} \subseteq I(Z)^{q\sum{m_i}+r'-1}=I(Z)^n.$$
Then  $ \rho(I(Z)) \leq (1+\sum_{i=0}^2m_i)/ (\sum_{i=0}^2m_i)$.
\end{proof}
 From Corollary \ref{cor5} and Proposition \ref{prop} we can immediately deduce  Theorem \ref{casec}, therefore we have a complete description for the resurgence of a fat point scheme consisting of three noncollinear points of $\mathbb{P}^N$. 
\section*{Acknowledgement}
\thanks{This project started during the summer school PRAGMATIC 2017. The authors would like to thank Enrico Carlini,  Tai Huy Ha, Brian Harbourne and Adam Van Tuyl,  for giving very interesting lectures and for sharing open problems. They wish to thank all the organizers of PRAGMATIC 2017 for giving them the opportunity to attend the school. The authors owe also special thanks to the anonymous referee for the interesting and extensive comments on an earlier version of this paper.}


\begin{thebibliography}{}
\bibitem{bocci2010resurgence}  Bocci, C., Harbourne, B. (2010). {\em The resurgence of ideals of points and the containment problem}  Proc. Amer. Math. Soc. 138(4):1175--1190.

\bibitem{bocci2010comparing} Bocci, C., Harbourne, B. (2010). {\em Comparing powers and symbolic powers of ideals},  J. Algebraic Geom. 19(3):399--417.

\bibitem{boccicooper} Bocci, C.,Cooper, S., Harbourne, B. (2014). {\em Containment results for ideals of various configurations of points in $\mathbb{P}^N$},  J. Pure Appl. Alg. 218: 6--75.

\bibitem{DENKERT2013120}
Denkert, A., Janssen, M. (2013). {\em Containment problem for points on a reducible conic in $ \mathbb{P}^2 $}, Journal of Algebra, 394: 120--138.

\bibitem{Ein2001} Ein, L., Lazarsfeld, R., Smith, K. (2001). {\em Uniform bounds and symbolic powers on smooth varieties}, Invent. Math.   144(2):241--252.

\bibitem{defect} Galetto, F., Geramita, A. V., Shin, Y. S., Van Tuyl, A. (2017). {\em The symbolic defect of an ideal}, preprint, arXiv:1610.00176.

\bibitem{Geramita2013} Geramita, A. V., Harbourne, B., Migliore, J. (2013). {\em Star configurations in {$\mathbb{P}^n$}}, J. Algebra 376:279--299.

\bibitem{Hochster1973} Hochster, M. (1973). {\em Criteria for equality of ordinary and symbolic powers of primes},  Math. Z. 133:53--65.

\bibitem{Hochster2002} Hochster, M., Huneke, C. (2002). {\em Comparison of symbolic and ordinary powers of ideals},  Invent. Math. 147(2):349--369.

\bibitem{Kodiyalam2000} Kodiyalam, V. (2000). {\em Asymptotic behaviour of {C}astelnuovo-{M}umford regularity}, Proc. Amer. Math. Soc.   128(2):407--411.

\bibitem{Li2006} Li, A., Swanson, I. (2006). {\em Symbolic powers of radical ideals},  Rocky Mountain J. Math. 36(3):997--1009.

\bibitem{Swanson2000}  Swanson, I. (2000). {\em Linear equivalence of ideal topologies}, Math. Z.   234(4):755--775.

\bibitem{Zariski1960} Zariski, O., Samuel, P. (1975). {\em Commutative algebra. {V}ol. {II}}  Springer-Verlag, New York-Heidelberg.
\end{thebibliography}
\end{document}